\definecolor{royalblue}{rgb}{0,0,0.128}
\def\bp{\begin{proof}}
\def\ep{\end{proof}}
\def\n{\nabla}
\def\sfrac#1#2{\mbox{\Large$\frac{#1}{#2}$}}
\def\intl#1{\int\limits_{#1}}
\def\dm {|\hskip-0.05cm|}
\def\displ{\displaystyle}
\def\VS{\vspace{6pt}\\\displ }
\def\rf#1{{\rm(\ref{#1})}}
\newcommand{\essinf}{\mathop{\mathrm{ess\,inf}}}
\newcommand{\R}{\mathbb{R}}
\newcommand{\N}{\mathbb{N}}
\def\à{à}
\def\vep{\varepsilon}
\def\be{\begin{equation}}
\def\ba{\begin{array}}
\def\ea{\end{array}}
\def\ee{\end{equation}}
\def\vs1{\vspace{1ex}}
\def\vp{\varphi}
\def\ov{\overline}
\def\Ã©{\'{e}}
\def\Ãš{\`{e}}
\newtheorem{lemma}
{\bf Lemma} 
\font\sc=cmcsc10
\title{\Large\bf Weighted estimates for the Stokes semigroup in the half-space}
\author{\sc  Angelica Pia Di Feola and Vittorio Pane\thanks{Dipartimento di Matematica e Fisica,  
Universit\`{a} degli 
Studi della Campania
``L. Vanvitelli'', via Vivaldi 43, 81100 \null\hskip0.55cmCaserta,
 Italy.\newline\null\hskip0.55cm
vittorio.pane2@unicampania.it
 \newline \null\hskip0.55cm 
angelicapia.difeola@unicampania.it}}
\date{\today}
\begin{document}
\markboth{\footnotesize\rm  A. P. Di Feola, V. Pane}{\footnotesize\rm
Weigthed estimates for the Stokes semigroup in the half-space}
\maketitle
\noindent
\newcommand{\red}{\protect\bf}
\renewcommand\refname{\centerline
{\red {\normalsize \bf References}}}
\newtheorem{ass}
{\bf Assumption} 
\newtheorem{defi}
{\bf Definition} 
\newtheorem{theorem}
{\bf Theorem} 
\newtheorem{rem}
{\sc Remark} 
\newtheorem{coro}
{\bf Corollary} 
\newtheorem{prop}
{\bf Proposition} 
\renewcommand{\theequation}{\arabic{equation}}
\setcounter{section}{0}
\section{Introduction}

In this note, we study the initial-boundary value problem for the Stokes system in the half-space within the weighted Lebesgue spaces setting. The leading idea of this work is to provide  a first step toward the study of solutions to the Navier–Stokes system in weighted spaces and their corresponding asymptotic properties related to the spatial and time variables, both in the half-space and in exterior domains. Indeed, our future aim is to extend the results contained in \cite{GM}, in which the authors study the perturbations of the steady case, and in \cite{MP} where, with respect to the Navier-Stokes Cauchy problem, the asymptotic behavior of the solution is established. We point out that in both of the quoted papers, the scaling invariant Lebesgue weighted space $L^p_\alpha $, with a radial weight $|x|^{\alpha}$, $\alpha=1-\frac{n}{p}$, $p>n\geq 3$, is considered.\par
We consider a new scaling invariant space, say $L^p_w$, where the weight function is defined in \eqref{wf}. In this new framework, we generalize the result concerning the existence of a unique strong solution to the Stokes Cauchy problem. This generalization allows us to establish an existence and uniqueness result for regular solutions to the initial boundary value problem (IBVP) to the Stokes system within our functional framework.\par
There is a wide literature concerning the well posedness in the setting of weighted spaces. We quote some contributions that for different aspects seem close to our result. In \cite{KK1}, the authors consider a weight function in the form $\ov{w}(x)=<x>^{sp}=(1+|x|^2)^\frac{sp}{2}$ with $1<p<\infty$, $0\leq s<(n-1)(1-\frac{1}{p})$. In particular, they provide a result on the  weighted $L^q$-$L^p$ estimates, $1<p\leq q<\infty$, for the Stokes semigroup in the half-space and in perturbed half-space (with $C^2$ boundary) for an initial datum belonging to $L^p_{\ov{w}}\cap L^p$. In \cite{farwingarx}, in another context,  the authors consider the same weight function providing $L^q$-$L^p$ estimates on the Stokes semigroup in exterior domains.\\
In \cite{han},
weighted estimates for the Stokes flow in the half space $\R^n_+$ are provided: the author considers three types of weight function, in particular the radial one.\par
As will be clear, our result is coherent with estimates in  \cite{han}, relative to the scaling invariant setting.\\
The literature also contains contributions devoted to the weighted space theory for the Stokes resolvent system. In this regard, we refer to \cite{farsohr, Fro, FroII}.\par
In the following, the symbol $\mathbb{R}^n_{+}$ denotes the $n$-dimensional half space, i.e. 
$$\mathbb{R}^n_{+}=\{x\in \mathbb{R}^n\,:\, x_n>0\}. $$
Let $p>n$, and $\alpha=1-\frac{n}{p}$. Moreover, for $m\in \N$, let $\{\alpha_j\}_{j=1,\dots,m}$ be such that $\alpha_j\geq 0$ for all $j\in\{1,\dots,m\}$ and $\displaystyle \sum_{j=1}^m\alpha_j=\alpha$. By $w(x)$ we denote the following weight function
\be \label{wf}
w(x)=\prod_{j=1}^m|x-\bar{x}_j|^{\alpha_j}\,,
\ee
where $\bar{x}_j$, for $j=1,\dots,m$, are fixed (not necessarily distinct) points in $\R^n_+$.\\
Let  $\Omega$ denote either $\R^n$ or $\R^n_+$. We consider the following weighted Lebesgue space:
\begin{equation}\label{DSP}
L^p_{w}(\Omega):=\{u\, :\, w(x)\,u \in L^p(\Omega)\}\,,
\end{equation}
endowed with the natural norm 
$$\dm   u\dm _{L^p_{w}(\Omega)}:=\dm u\,w\dm _{L^p(\Omega)}\,.$$
By $p'$ we mean the conjugate exponent of $p$. We set $\alpha'_j:=-\alpha_j$,  $\alpha'=-\alpha$, and
\begin{equation}\label{dwf}
    w'(x)=\prod_{j=1}^m|x-\bar{x}_j|^{\alpha'_j}\,.
\end{equation} 
By the symbol $L^{p'}_{w'}(\Omega)$, we mean the dual space of $L^p_w(\Omega)$\footnote{The reader is referred to \cite{Kuf} for further details}. Moreover, we set
\[\mathscr{C}_0(\Omega):=\{\phi \in \mathit{C}_0^{\infty}(\Omega)\, :\, \n \cdot \phi=0\}\,,\]
and by $J^{p}_{w}(\Omega)$ we denote the completion of $\mathscr{C}_0(\Omega)$ in the $L^p_{w}(\Omega)$-norm\footnote{For more details, see Theorem\,\ref{HWD} and appendix,  and  \cite{Fr-UF,HDS} too.}.\\
Where there is no ambiguity, we will indicate $\dm   \cdot \dm _{L^p_{w}(\mathbb{R}^n)} = \dm   \cdot \dm _{w,p} $ and $\dm   \cdot \dm _{L^p(\mathbb{R}^n)} = \dm   \cdot \dm _{p}$. \par
Hereafter we use the symbol $D_x^{\mathtt{k}}$ to denote the partial derivatives with respect to the variable $x \in \R^n$, where $\mathtt{k}$ is a multi-index with $|\mathtt{k}|=k$. In addition, we use the symbol $D_t^h$ to denote  the $h$-th partial derivative with respect to the real variable $t$.\par
We consider the following initial boundary value problem for the Stokes system
\begin{equation}\label{problem}
\begin{array}{ll}
v_t + \nabla \pi= \Delta v \,,\;&\text{in}\,\, (0,\infty) \times \mathbb{R}^n_{+}\,, \\
\nabla \cdot v=0\,, &\text{in}\,\, (0,\infty) \times \mathbb{R}^n_{+}, \\
v_{| x_n=0}\,=0 \,,\\
v(0,x)=v_0(x)\,, 
\end{array}
\end{equation}
under the assumptions that $v_0 \in L^p_{w}(\mathbb{R}^n_{+})$, and $\nabla \cdot v_0=0$ in weak sense.\par
We are interested in proving the following Theorem:
\begin{theorem}\label{Theoremprincipale}
\sl
Let  consider $p>n\geq 3$. Let $w$ be the weight function defined in \eqref{wf}. Then, for all $v_0\in J^p_w(\R^n_+)$ there exists a unique smooth solution to the problem \eqref{problem}. Moreover, there exists a constant $c$ independent of $v_0$ and of $s\geq 0$, such that
\be \label{smgP}
\dm v(t)\dm_{L^p_w(\R^n_+)}\leq c \dm v(s)\dm_{L^p_w(\R^n_+)}, \quad \mbox{for all } t>s\geq 0, 
\ee
for any $l, k \in \mathbb{N}_0$, $q>n$ and for all $t>0$, the following estimate holds
\be \label{RP}
\dm D_t^l \nabla^k v(t)\dm_{L^q(\R^n_+)}
\leq c\, (t-s)^{-\frac{n}{2}(\frac{1}{n}-\frac{1}{q})-\frac{k}{2}-l}\dm v(s)\dm_{L^p_w(\R^n_+)},  \quad \mbox{for } t>s\geq 0\,,
\ee
and the pressure term fulfills the following estimate
\begin{equation}\label{pressure}
 \dm  \nabla \pi_v(t)\dm_{L^q(\R^n_+)}\leq c\, (t-s)^{-\frac{n}{2}(\frac{1}{n}-\frac{1}{q})-1}\dm v(s)\dm_{L^p_w(\R^n_+)}\,, \quad \mbox{for } t>s\geq 0\,.  
\end{equation}
Moreover, the solution admits the integral representation \eqref{IRv} and satisfies the following initial condition
\be \label{LP}
\lim_{t \to 0^+} \dm v(t)-v_0 \dm_{L^p_w(\R^n_+)}=0.
\ee
Finally, for any $q>n$, $\varepsilon>0$, and $T>0$ we have
\be \label{regular_Hs}
v\in C([0,T);J^p_w(\R^n_+))\cap C((\varepsilon,T); L^q(\R^n_+))\,.
\ee
\end{theorem}
We point out that, if one considers $\ov{x}_1=\ov{x}_2=\dots=\ov{x}_m=0$, the result agrees with that obtained for the radial weight in \cite{han}.\par
The plan of the paper is the following. In Section \ref{preliminary}, we introduce some notations and definitions, some properties of the weight function \eqref{wf}, and we provide an interpolation inequality. In Section \ref{cauchy oroblem} we consider the Stokes Cauchy problem deriving useful properties that allow us to prove Theorem \ref{Theoremprincipale} in Section \ref{IBVP problem}.

\section{Preliminary results and notations}\label{preliminary}
In this section, we recall some notions that will be employed in our proofs and we prove some preliminary results.\par
Firstly, we introduce the Lorentz spaces\footnote{\,For a wider background, we refer
the reader {\it e.g.} to \cite{Hunt}, \cite{PKJF-LS} .}. Let $\Omega \subseteq \R^n$, and $g(x)$ be a (Lebesgue) measurable function, we denote by $\mu_g$ the distribution function of $g$, that is:
\[\mu_g(\lambda):=meas(\{x\in \Omega\,:\, |g(x)|>\lambda\})\,,\quad \text{for all}\,\, \lambda>0,\]
and we define the non-increasing rearrangement of $g$ as:
\[g^*(t):=\inf\{\lambda\,:\, \mu_g(\lambda)\leq t\}\,, \quad \text{for}\,\, t\in[0,\infty).\]
Now, assume that $0<p\leq q \leq \infty$. The Lorentz space $L^{p,q}(\Omega)$ is the collection of all measurable functions $g(x)$ such that $\dm g  \dm _{L^{p,q}(\Omega)}<\infty$, where:
\begin{equation}
\dm g  \dm _{L^{p,q}(\Omega)}=
\begin{cases}
\bigg(\mbox{\Large$ \int$}_0^{\infty}[t^{\frac{1}{p}}g^*(t)]^q\,  \frac{dt}{t}\bigg)^{\frac{1}{q}}\quad &\text{if}\,\, 0<q<\infty,\\
\displaystyle\sup_{t\in (0,\infty)} t^{\frac{1}{p}}g^*(t) \quad &\text{if}\,\, q=\infty.
\end{cases}
\end{equation}
Additionally, we consider the Helmholtz decomposition for the space $L^q_{w}(\R^n_+)$, $q\in(1,\infty)$. Precisely, we set
\begin{gather*}
J^q_{w}(\mathbb{R}_+^n)=\{v\in L^q_{w}(\mathbb{R}_+^n)\, |\, (v,\nabla h)=0 \,\, \text{for all}\, h \in W^{1,q'}_{loc}(\mathbb{R}_+^n,w')\text{,}\, \nabla h \in L^{q'}_{w'}(\R_+^n)\}\,,\\
G^q_{w}(\mathbb{R}_+^n)=\{u\in L^q_{w}(\mathbb{R}_+^n)\, |\, \exists \pi\, : \,\, u=\nabla \pi \text{,}\,\text{with} \, \pi \in W^{1,q}_{loc}(\mathbb{R}_+^n,w)\text{,}\, \nabla \pi \in L^{q}_{w}(\R_+^n)\}\,,
\end{gather*}
where by the symbol $W^{1,q}_{loc}(\mathbb{R}_+^n,w)$ we mean the space of functions $\{\pi : \pi \in L^1_{loc}(\R^n_{+}) \,\text{and}\, \nabla \pi \in L^q_w(\R^n_+)\}$.\\ 
We have the following result:
\begin{theorem}\label{HWD}{\sl 
Let $q \in (1,+\infty)$. Then we get 
\begin{equation*}
L^q_{w}(\R_+^n)=J^q_{w}(\mathbb{R}_+^n)\oplus G^q_{w}(\mathbb{R}_+^n)\,,
\end{equation*}
 that is for all $u \in L^q_{w}(\R_+^n)$, $u=v+\nabla \pi_u$, with the following integral identities:
\begin{gather*}
(u,\nabla \pi)=(\nabla \pi_u ,\nabla \pi) \quad \text{for all}\, \, \nabla \pi \in G_{w'}^{q'}(\R_+^n) ,\\
(v,\nabla \pi)=0 \quad \text{for all}\, \, \nabla \pi \in G_{w'}^{q'}(\R_+^n),
\end{gather*}
and
\begin{equation*}
\dm v\dm _{w, q} + \dm  \nabla \pi \dm _{w, q} \leq C \dm u\dm _{w, q}\,,
\end{equation*}
with $C$ independent of $u$.}
\end{theorem}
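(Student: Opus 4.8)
The plan is to reduce the decomposition to the unique solvability of a weighted weak Neumann problem together with its a priori bound. Given $u\in L^q_w(\R^n_+)$, I would seek a potential $\pi$ with $\nabla\pi\in L^q_w(\R^n_+)$ and $\pi\in W^{1,q}_{loc}(\R^n_+,w)$ solving
\[
(\nabla\pi,\nabla\phi)=(u,\nabla\phi)\qquad\text{for every }\nabla\phi\in G^{q'}_{w'}(\R^n_+)\,.
\]
Setting $\pi_u:=\pi$ and $v:=u-\nabla\pi_u$, the very choice of the test class gives $(v,\nabla\phi)=0$ for all $\nabla\phi\in G^{q'}_{w'}(\R^n_+)$, so that $v\in J^q_w(\R^n_+)$ while $\nabla\pi_u\in G^q_w(\R^n_+)$; this is exactly the pair of integral identities in the statement, and the decomposition $u=v+\nabla\pi_u$ follows. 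The norm estimate then collapses to the single inequality $\dm\nabla\pi\dm_{w,q}\le C\dm u\dm_{w,q}$, since $\dm v\dm_{w,q}\le\dm u\dm_{w,q}+\dm\nabla\pi\dm_{w,q}$.

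The sum is direct: a field lying in $J^q_w\cap G^q_w$ annihilates the whole class $G^{q'}_{w'}$ while being itself a gradient, hence it is orthogonal to (a dense part of) its own class in the $L^q_w$--$L^{q'}_{w'}$ duality of \eqref{DSP}--\eqref{dwf} and must vanish. Closing this argument, as well as the solvability above, requires the companion problem for the conjugate pair $(q',w')$ to be equally well posed; this is what makes uniqueness and the topological splitting available simultaneously.

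The analytic core is the a priori bound, equivalently the $L^q_w$-boundedness of the Helmholtz projection $P:u\mapsto v$ in the half-space. I would obtain it from Muckenhoupt weighted theory. One first checks that $w^q$ belongs to the class $A_q(\R^n)$: each factor $|x-\bar x_j|^{\alpha_j q}$ is a radial power weight, admissible precisely when its exponent lies in $(-n,n(q-1))$; the lower bound is automatic because $\alpha_j\ge0$, and the upper bound, together with the growth $w(x)\sim|x|^{\alpha}$ at infinity coming from $\sum_j\alpha_j=\alpha=1-\tfrac np<1$, is secured by the smallness of $\alpha$ (the admissible range of $q$ being governed by $\alpha$ and, in particular, containing $p$ and its conjugate $p'$). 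Since the singularities at the separated points $\bar x_j$ are mutually disjoint, the product weight inherits the $A_q$ condition with a controlled constant. With $w^q\in A_q$ in hand, the weak Neumann problem on $\R^n_+$ and the bound $\dm\nabla\pi\dm_{w,q}\le C\dm u\dm_{w,q}$ follow from the weighted resolvent and Helmholtz theory for Muckenhoupt weights of \cite{Fro,FroII} (see also \cite{farsohr}); alternatively, one reflects $u$ evenly in the tangential components and oddly in the normal one across $\{x_n=0\}$ to reduce to $\R^n$, where the projection reduces to Riesz transforms that are bounded on $A_q$-weighted $L^q$. Here the duality $w^q\in A_q\Leftrightarrow (w')^{q'}\in A_{q'}$ furnishes the well-posedness of the conjugate problem used above.

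The step I expect to be the main obstacle is precisely this weighted a priori bound in the half-space, because it must accommodate at once the interior singularities at the $\bar x_j\in\R^n_+$ and the Neumann condition on $\{x_n=0\}$. In the reflection route one has to track the reflected weight: the reflected singular points $\bar x_j^\ast=(\bar x_j',-\bar x_{j,n})$ land in the lower half-space, so on $\R^n_+$ the symmetrised weight stays comparable to $w$, but one still has to certify that the global reflected weight lies in $A_q(\R^n)$ before invoking the whole-space singular-integral estimates. Quantifying a uniform $A_q$ constant for the product weight (as opposed to a single power) is thus the delicate point, and it is here that the hypotheses $\alpha_j\ge0$ and $\sum_j\alpha_j=\alpha<1$ are essential.
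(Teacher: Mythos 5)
Your proposal is structurally sound, but it follows a genuinely different route from the paper's own proof. The paper proves the statement in its Appendix (Lemma \ref{lemma} and Theorem \ref{dec_helmotz_teorema}) in a self-contained, potential-theoretic way: for $u\in C_0^\infty(\R^n_+)$ it solves the Neumann problem \eqref{NP} \emph{explicitly} through the reflected kernel $\mathcal{E}(x-y)+\mathcal{E}(x-y^*)$, obtains the weighted bound \eqref{stein} from the weighted Calder\'on--Zygmund theorem \eqref{WCZ}, proves both $J^q_w\cap G^q_w=\{0\}$ and the membership $v=u-\nabla\psi\in J^q_w$ by a cut-off argument (the function $h_R$, subtraction of the mean $\overline{\pi}_R$, decay of $\nabla\psi$, limit $R\to\infty$), and finally reaches general $u\in L^q_w$ by density of $C_0^\infty$. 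You instead reduce the decomposition to the weak Neumann problem, settle directness by duality with the conjugate problem, and outsource the analytic core either to the Muckenhoupt-weight Helmholtz theory of Fr\"ohlich et al.\ or to a reflection to $\R^n$ plus Riesz transforms. The first option is legitimate --- it is essentially the alternative the paper itself offers when it quotes \cite{Fr-UF,HDS} right after the statement --- and your reduction, together with the duality $w^q\in A_q\Leftrightarrow (w')^{q'}\in A_{q'}$, is the right functional-analytic bookkeeping. What the paper's longer proof buys is self-containedness and an explicit solution formula of the same type used in Section \ref{IBVP problem}; what yours buys is brevity and a clear identification of exactly where the weight enters.

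Two concrete soft spots remain in your argument. First, in your self-contained (reflection) fallback the crux is left open, as you yourself admit: one must either prove that the folded weight $\prod_j\min(|x-\bar{x}_j|,|x-\bar{x}_j^*|)^{\alpha_j}$, raised to the power $q$, belongs to $A_q(\R^n)$, or --- as the paper does --- treat the reflected kernel $\nabla\nabla\mathcal{E}(x-y^*)$ directly as a singular integral; without one of these the reflection route is a plan, not a proof. Second, your parenthetical claim that the admissible range of $q$ contains both $p$ and its conjugate $p'$ is wrong for the weight $w$ itself: $w^{p'}\in A_{p'}$ requires $\max_j\alpha_j<n/p$, which already fails for $m=1$ and $p\geq 2n$. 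This slip is harmless only because the conjugate problem lives in $L^{p'}_{w'}$, whose Muckenhoupt condition is $(w')^{p'}\in A_{p'}$, i.e.\ $\max_j\alpha_j\,p'<n$; this holds for every $p>n$ and is, by the very duality you state, equivalent to $w^p\in A_p$. Relatedly, note that neither your argument nor the paper's covers every $q\in(1,\infty)$ as the statement claims: both hinge on $(w')^{q'}\in A_{q'}$, which forces roughly $q'<n/\alpha$ and thus excludes $q$ near $1$; you at least hint at this restriction, while the paper's Appendix applies \eqref{WCZ} without comment.
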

\begin{proof}
For the sake of completeness, we give the proof in the Appendix, Theorem \ref{dec_helmotz_teorema}. However, we also quote {\it e.g.}, \cite{Fr-UF,HDS}.
\end{proof}
In the following, we denote by $\mathcal{E}(x) $ the fundamental solution of the Laplace equation, i.e. for $n\geq 3$
\begin{equation}\label{Erapp}
    \mathcal{E}(x) = \frac{1}{n(n-2)V(n) |x|^{2-n}} 
\end{equation}
where $V(n)$ denotes the volume of the unit sphere in $\R^n$, and as $H(t,x)$ the fundamental solution of the heat equation, that is,
\begin{equation}\label{Hrapp}
    H(t,x) = \begin{cases}
\frac{1}{(4\pi t)^{\frac{n}{2}}}e^{-\frac{|x|^2}{4t}} & \text{if } \; t>0\\
0 & \text{if } \; t=0.
\end{cases}
\end{equation}
Now, we introduce the definition of a particular class of weight functions 
\begin{defi}{(Muckenhoupt $\text{A}_\text{p}$-weight)}
\sl{A positive function $w\in L^1_{loc}(\R^n)$ is called $\text{A}_\text{p}$-weight, written $w\in \text{A}_\text{p} $, if, and only if, there exists a constant $C>0$ such that for $p=1$,
$$\int_Q w(x) \, dx \leq C (\essinf_Qw)\left|Q\right|, $$
and, if $1<p<\infty$,
$$\left(\frac{1}{\left|Q\right|} \int_Q w \, dx\right)\left(\frac{1}{\left|Q\right|} \int_Q w^{\frac{1}{p-1}} \, dx\right)^\frac{1}{p-1} \leq C.$$
for every cube $Q\subset \R^n$.}
\end{defi}
In particular, our weight function $w(x)$ defined in \eqref{wf} belongs to $A_p$ class. For the weighted theory of singular integrals we refer the reader to \cite{CF}, \cite{Garcia}.\par In what follows, we will make use of the following results
\begin{theorem}
    \sl{ Let $S$ be a regular singular integral operator, $p \in (1,\infty)$, and $w\in A_p$. Then $S$ is bounded in $L^p_w(\R^n)$. More precisely, there is a constant $C$ depending only on $S$, $w$, and $p$ such that
\begin{equation}\label{WCZ}
    \dm Sf\dm_{w,p}\leq C \dm f\dm_{w,p}\, ,
\end{equation} 
    for all $f\in L^p_w(\R^n)$.
    }
\end{theorem}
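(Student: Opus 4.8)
The statement is the classical weighted Calder\'on--Zygmund theorem, and I would prove it by the Coifman--Fefferman good-$\lambda$ method, reducing the weighted bound for $T$ to the weighted bound for the Hardy--Littlewood maximal operator $M$. Two external facts enter. First, Muckenhoupt's theorem: for $1<p<\infty$ the operator $M$ is bounded on $L^p_w(\R^n)$ if and only if $w\in A_p$; this is exactly where the hypothesis $w\in A_p$ is consumed. Second, the inclusion $A_p\subset A_\infty$ together with the self-improving (reverse H\"older) property of $A_\infty$ weights, which yields constants $C,\delta>0$ such that
\[ w(E)\le C\Big(\frac{|E|}{|Q|}\Big)^{\delta} w(Q) \]
for every cube $Q$ and every measurable $E\subseteq Q$. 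The regularity of $T$ means it is bounded on $L^2(\R^n)$ and its kernel $K$ satisfies the size bound $|K(x,y)|\le c|x-y|^{-n}$ and the H\"ormander smoothness condition; only these qualitative properties of $T$ are used.

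The core is the good-$\lambda$ inequality: there exist $\gamma_0,C,\delta>0$ so that for all $\lambda>0$ and $\gamma\in(0,\gamma_0)$,
\[ w\big(\{|Tf|>2\lambda,\ Mf\le\gamma\lambda\}\big)\le C\gamma^{\delta}\, w\big(\{|Tf|>\lambda\}\big). \]
I would first establish the unweighted version, with constant $C\gamma$ in place of $C\gamma^\delta$: decompose the open set $\{|Tf|>\lambda\}$ into Whitney cubes, and on each cube estimate the contribution of the far part of the kernel using the H\"ormander condition, while the local part is controlled on a set of small measure by the weak-$(1,1)$ bound for $T$; the constraint $Mf\le\gamma\lambda$ forces the exceptional set inside each Whitney cube to have Lebesgue measure at most $C\gamma$ times that of the cube. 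Applying the $A_\infty$ comparison above cube by cube then upgrades the Lebesgue good-$\lambda$ inequality to the weighted one displayed, with the gain $\gamma^\delta$.

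Integrating the weighted distributional inequality against $p\lambda^{p-1}\,d\lambda$ and using the splitting $w(\{|Tf|>2\lambda\})\le w(\{|Tf|>2\lambda,\,Mf\le\gamma\lambda\})+w(\{Mf>\gamma\lambda\})$ gives
\[ 2^{-p}\,\dm Tf\dm_{w,p}^{p}\le C\gamma^{\delta}\,\dm Tf\dm_{w,p}^{p}+\gamma^{-p}\,\dm Mf\dm_{w,p}^{p}. \]
Choosing $\gamma$ small enough that $C\gamma^{\delta}<2^{-p-1}$ absorbs the first term on the right, and Muckenhoupt's theorem bounds $\dm Mf\dm_{w,p}$ by $\dm f\dm_{w,p}$, yielding \eqref{WCZ}. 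The one genuinely delicate point is this absorption step, which is legitimate only once $\dm Tf\dm_{w,p}$ is known to be finite a priori; I would secure this by first proving the estimate for $f$ bounded with compact support (for which the left-hand side is manifestly finite), or equivalently by working with the truncated singular integrals $T_\varepsilon$ with bounds uniform in $\varepsilon$, and then passing to the limit and extending to all of $L^p_w(\R^n)$ by density. An essentially equivalent alternative avoids good-$\lambda$ altogether: prove the pointwise estimate $(Tf)^{\#}(x)\le C\,(M|f|^{s})(x)^{1/s}$ for some $s>1$ on the Fefferman--Stein sharp maximal function, then invoke $\dm g\dm_{w,p}\le C\dm g^{\#}\dm_{w,p}$ for $A_\infty$ weights together with the boundedness of the fractional maximal operator $f\mapsto (M|f|^{s})^{1/s}$ on $L^p_w$, which holds because $A_p$ is open, so $w\in A_{p-\epsilon}$ for some $\epsilon>0$. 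The main obstacle in either route is the same: combining the kernel regularity with the quantitative $A_\infty$ self-improvement to extract the crucial $\gamma$-gain, and handling the a priori finiteness needed to close the argument.
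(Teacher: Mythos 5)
The paper does not prove this theorem itself: its ``proof'' is a citation to Chapter IV, Theorem 3.1 of \cite{Garcia}, where the result is established by precisely the Coifman--Fefferman good-$\lambda$ argument you outline (with the sharp-maximal-function estimate as the standard variant). Your sketch is correct and complete in its essentials---including the two delicate points, namely the quantitative $A_\infty$ gain $\gamma^{\delta}$ and the a priori finiteness of $\dm Tf\dm_{w,p}$ handled via truncation or compactly supported $f$ plus density---so it matches the proof the paper defers to.
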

\begin{proof}
    For the proof, see Chapter IV, Theorem 3.1 in \cite{Garcia}.
\end{proof}
\begin{lemma}\label{lapes}
\sl{    Let consider $w(x)$ as in \eqref{wf}. Then the following there holds
    \begin{equation}\label{lapneg}
        \Delta w'^{\,p'}(x) \leq (\alpha' p' + n -2) \sum_{h=1}^m (\alpha'_hp')
\prod_{i=1}^m |x-\ov{x}_i|^{\alpha'_ip'-2\delta_{hi}}< 0\,,    \end{equation}
where $\alpha\,' $, $w'(x)$  are defined in \eqref{dwf}, $p'$ is the conjugate exponent of $p>n$, and $\delta$ is the Kronecker function.}
\end{lemma}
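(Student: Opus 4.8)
The plan is to compute $\Delta w'^{\,p'}$ by logarithmic differentiation and then reduce the claimed inequality to an elementary pointwise estimate that is driven entirely by the fact that the exponents $\alpha'_j p'=-\alpha_j p'$ are nonpositive. Write $r_i=|x-\ov{x}_i|$ and $f:=w'^{\,p'}=\prod_{i=1}^m r_i^{\,\alpha'_i p'}$, which is smooth and strictly positive on $\R^n\setminus\{\ov{x}_1,\dots,\ov{x}_m\}$; the inequality \eqref{lapneg} is understood pointwise there. Since $\alpha_i\geq 0$ and $p'>0$, every exponent $\alpha'_i p'$ is nonpositive, so that each product $(\alpha'_i p')(\alpha'_j p')\geq 0$: this is the structural fact the whole argument rests on.

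First I would set $g:=\n\log f=\sum_{i=1}^m (\alpha'_i p')\dfrac{x-\ov{x}_i}{r_i^2}$, so that $\n f=f\,g$ and hence $\Delta f=\n\cdot(f g)=f\big(|g|^2+\n\cdot g\big)$. Using the identity $\n\cdot\dfrac{x-\ov{x}_i}{|x-\ov{x}_i|^2}=\dfrac{n-2}{r_i^2}$ (a one-line computation in $\R^n$), the divergence term equals $(n-2)\sum_i \dfrac{\alpha'_i p'}{r_i^2}$, while expanding the square gives $|g|^2=\sum_{i,j}(\alpha'_i p')(\alpha'_j p')\dfrac{(x-\ov{x}_i)\cdot(x-\ov{x}_j)}{r_i^2 r_j^2}$. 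Dividing the target inequality by $f>0$ and recalling $\alpha'p'=\sum_i\alpha'_i p'$, the estimate \eqref{lapneg} becomes equivalent to the purely algebraic claim
\[
|g|^2\;\leq\;\Big(\sum_i \alpha'_i p'\Big)\Big(\sum_i \tfrac{\alpha'_i p'}{r_i^2}\Big)=\sum_{i,j}\frac{(\alpha'_i p')(\alpha'_j p')}{r_i^2}\,,
\]
after which \eqref{lapneg} follows because $\prod_i r_i^{\,\alpha'_i p'-2\delta_{hi}}=f/r_h^2$.

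The key step is then the termwise bound of $|g|^2$. Here the nonnegativity of the coefficients $(\alpha'_i p')(\alpha'_j p')$ is exactly what allows me to use Cauchy–Schwarz, $(x-\ov{x}_i)\cdot(x-\ov{x}_j)\leq r_i r_j$, and AM–GM, $\tfrac{1}{r_i r_j}\leq\tfrac12\big(\tfrac1{r_i^2}+\tfrac1{r_j^2}\big)$, inside each summand; summing and symmetrizing in $i,j$ yields $|g|^2\leq \tfrac12\sum_{i,j}(\alpha'_i p')(\alpha'_j p')\big(\tfrac1{r_i^2}+\tfrac1{r_j^2}\big)=\sum_{i,j}\tfrac{(\alpha'_i p')(\alpha'_j p')}{r_i^2}$, which is precisely the right-hand side above. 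This establishes the first inequality in \eqref{lapneg}.

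Finally, for the strict negativity I would check the two sign factors separately. Since $\sum_j\alpha_j=\alpha=1-\tfrac{n}{p}>0$ for $p>n$, at least one $\alpha_h>0$, so $\sum_h (\alpha'_h p')\,f/r_h^2<0$. For the prefactor, a direct computation gives $\alpha'p'=-\tfrac{p-n}{p-1}\in(-1,0)$, whence $\alpha'p'+n-2\in(n-3,n-2)$, which is strictly positive for $n\geq 3$; the product of a positive and a negative quantity is negative. The only delicate point is the sign bookkeeping: the termwise Cauchy–Schwarz/AM–GM estimate is legitimate only because all coefficients $(\alpha'_i p')(\alpha'_j p')$ are guaranteed nonnegative, and were any $\alpha_j$ allowed to be negative the cross terms of $|g|^2$ could not be controlled in this way.
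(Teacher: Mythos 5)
Your proof is correct, but it takes a genuinely different route from the paper's. The paper argues by induction on $m$: it splits $w'^{\,p'}$ as $\bigl(\prod_{j=1}^{m-1}|x-\ov{x}_j|^{\alpha'_j p'}\bigr)\,|x-\ov{x}_m|^{\alpha'_m p'}$, applies the Leibniz rule $\Delta(uv)=v\,\Delta u+u\,\Delta v+2\,\nabla u\cdot\nabla v$, invokes the inductive hypothesis on the first factor, and controls the cross term via $(x-\ov{x}_h)\cdot(x-\ov{x}_m)\leq \tfrac12|x-\ov{x}_h|^2+\tfrac12|x-\ov{x}_m|^2$ --- which is exactly your Cauchy--Schwarz plus AM--GM step, used for the single pair $(h,m)$ at each stage of the induction. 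You instead compute $\Delta f=f\,(|g|^2+\nabla\cdot g)$ with $g=\nabla\log f$ in closed form, which removes the induction entirely and reduces the lemma to one symmetric algebraic estimate, $|g|^2\leq\bigl(\sum_i\alpha'_ip'\bigr)\bigl(\sum_i\alpha'_ip'/r_i^2\bigr)$, proved by the same pairwise inequality applied to all pairs simultaneously; both arguments hinge on the same sign structure $(\alpha'_ip')(\alpha'_jp')\geq0$. Your formulation is arguably more transparent about where the inequality is an identity ($m=1$) versus genuinely an inequality ($m\geq2$). One further point in your favor: in the final negativity step the paper asserts $(\alpha'p'+n-2)\,\alpha'_hp'<0$ for \emph{every} $h$, which is literally false if some $\alpha_h=0$ (that term merely vanishes); your version --- strict positivity of $\alpha'p'+n-2$ for $n\geq3$, via $\alpha'p'=-\frac{p-n}{p-1}\in(-1,0)$, combined with the existence of at least one $\alpha_h>0$ because $\sum_h\alpha_h=\alpha>0$ --- is the correct way to conclude, and it makes explicit that the standing hypothesis $n\geq3$ is needed precisely here.
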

\begin{proof}
First, for $j=1,..., m$, we know
\begin{equation}\label{gradj}
    \nabla (|x-\ov{x}_j|^{\alpha_j' p'}) = \alpha_j' p'  |x-\ov{x}_j|^{\alpha_j' p'-2}(x-\ov{x}_j)\,,
\end{equation}
\begin{equation}\label{lapj}
    \Delta (|x-\ov{x}_j|^{\alpha_j' p'}) = \alpha_j' p'(\alpha_j' p' -2 +n)  |x-\ov{x}_j|^{\alpha_j' p'-2}\,.
\end{equation}
    \par We argue by induction on $m \in \N$.\\
    For $m=1$, the statement is true since \eqref{lapj} holds.\\
    Let consider $m\in\N$, and suppose the statement to be true for $m-1$. We have
\begin{align*}
\Delta(w'^{\,p'})=&\Delta\left(\prod_{j=1}^m|x-\ov{x}_j|^{\alpha_j' p'}\right) = \\ &\Delta\left(\prod_{j=1}^{m-1}|x-\ov{x}_j|^{\alpha_j' p'}\right)|x-\ov{x}_m|^{\alpha_m' p'} + 
\left(\prod_{j=1}^{m-1}|x-\ov{x}_j|^{\alpha_j' p'}\right)\Delta(|x-\ov{x}_m|^{\alpha_m' p'})\,+\\ &  2\, \nabla\left( \prod_{j=1}^{m-1}|x-\ov{x}_j|^{\alpha_j' p'} \right) \cdot \nabla(|x-\ov{x}_m|^{\alpha_m' p'}) =: A_1+A_2+A_3.
\end{align*}
By inductive hypothesis
    \begin{align*}
    A_1\leq& \left( \left(\sum_{i=1}^{m-1}\alpha_{i}' p' + n -2\right) \sum_{h=1}^{m-1}\alpha'_{h}p' \prod_{i=1}^{m-1} |x-\ov{x}_i|^{\alpha'_ip'-2\delta_{ih}}\right) |x-\ov{x}_m|^{\alpha_m'p'} =\\
    &\left(\sum_{i=1}^{m-1}\alpha_{i}' p' + n -2\right) \sum_{h=1}^{m-1}\alpha'_{h}p' \prod_{i=1}^{m} |x-\ov{x}_i|^{\alpha'_ip'-2\delta_{ih}}\,.
    \end{align*} 
Moreover,
    \begin{align*}
        A_2 =& \left(\prod_{j=1}^{m-1}|x-\ov{x}_j|^{\alpha_j' p'}\right)\left( \alpha_m' p'(\alpha_m' p' -2 +n)  |x-\ov{x}_m|^{\alpha_m' p'-2} \right)= \\
        & \alpha_m' p'(\alpha_m' p' -2 +n) \left(\prod_{j=1}^{m}|x-\ov{x}_j|^{\alpha_j' p'-2\delta_{jm}}\right),
    \end{align*} 
and
    $$\nabla\left( \prod_{j=1}^{m-1}|x-\ov{x}_j|^{\alpha_j' p'} \right) = \sum_{h=1}^{m-1}\alpha'_hp'|x-\ov{x}_h|^{\alpha'_hp'-2}(x-\ov{x}_h) \prod_{\substack{i=1 \\ i \neq h}}^{m-1}|x-\ov{x}_i|^{\alpha'_ip'}. $$
Recalling \eqref{gradj},
    $$A_3 = 2\left( \sum_{h=1}^{m-1}\alpha'_hp'|x-\ov{x}_h|^{\alpha'_hp'-2}(x-\ov{x}_h) \prod_{\substack{i=1 \\ i \neq h}}^{m-1}|x-\ov{x}_i|^{\alpha'_ip'} \right) \cdot (\alpha_m' p'  |x-\ov{x}_m|^{\alpha_m' p'-2}(x-\ov{x}_m)). $$
Since $(x-\ov{x}_h) \cdot (x-\ov{x}_m) \leq \frac{1}{2} |x-\ov{x}_h|^2 + \frac{1}{2} |x-\ov{x}_m|^2$, we get
    \begin{align*}
    A_3 \leq & \left( \sum_{h=1}^{m-1}\alpha'_hp'|x-\ov{x}_h|^{\alpha'_hp'} \prod_{\substack{i=1 \\ i \neq h}}^{m-1}|x-\ov{x}_i|^{\alpha'_ip'} \right)\alpha_m' p'  |x-\ov{x}_m|^{\alpha_m' p'-2}\,+\\
    &\left( \sum_{h=1}^{m-1}\alpha'_hp'|x-\ov{x}_h|^{\alpha'_hp'-2} \prod_{\substack{i=1 \\ i \neq h}}^{m-1}|x-\ov{x}_i|^{\alpha'_ip'} \right)\alpha_m' p'  |x-\ov{x}_m|^{\alpha_m' p'}\,=\\
    & \alpha_m' p'\left( \sum_{h=1}^{m-1}\alpha'_hp'\prod_{i=1}^{m}|x-\ov{x}_i|^{\alpha'_ip' -2 \delta_{im}} \right)  + \alpha_m' p'\left( \sum_{h=1}^{m-1}\alpha'_hp' \prod_{i=1}^{m}|x-\ov{x}_i|^{\alpha'_ip'-2\delta_{ih}} \right)\,.
        \end{align*}
    Hence, finally we have 
    \begin{align*}
        \Delta(w'^{\,p'}) \leq & \left(\sum_{i=1}^{m-1}\alpha_{i}' p' + n -2\right) \sum_{h=1}^{m-1}\alpha'_{h}p' \prod_{i=1}^{m} |x-\ov{x}_i|^{\alpha'_ip'-2\delta_{ih}} \, +\\
        &\alpha_m' p'(\alpha_m' p' -2 +n) \left(\prod_{j=1}^{m}|x-\ov{x}_j|^{\alpha_j' p'-2\delta_{jm}}\right)\,+\\
        &\alpha_m' p'\left( \sum_{h=1}^{m-1}\alpha'_hp'\prod_{i=1}^{m}|x-\ov{x}_i|^{\alpha'_ip' -2 \delta_{im}} \right)  + \alpha_m' p'\left( \sum_{h=1}^{m-1}\alpha'_hp' \prod_{i=1}^{m}|x-\ov{x}_i|^{\alpha'_ip'-2\delta_{ih}} \right),
    \end{align*}
and, summing the first term with the last one and the second term with the third one, we have
    $$\Delta(w'^{\,p'}) \leq \left(\alpha' p' + n -2\right) \sum_{h=1}^{m-1}\alpha'_{h}p' \prod_{i=1}^{m} |x-\ov{x}_i|^{\alpha'_ip'-2\delta_{ih}} + (\alpha'p'-2+n)\alpha'_mp'\prod_{i=1}^{m}|x-\ov{x}_i|^{\alpha'_ip'-2\delta_{im}}, $$
that is 
$$\Delta(w'^{\,p'}) \leq (\alpha' p' + n -2) \sum_{h=1}^m (\alpha'_hp')
\prod_{i=1}^m |x-\ov{x}_i|^{\alpha'_i\,p'-2\delta_{hi}} .$$
Noting that $(\alpha' p' + n -2)\alpha'_hp'<0$ for all $h=1,\dots,m$, the proof is completed.
\end{proof}
\subsection{An interpolation inequality}
In this section, we provide an interpolation-type inequality for our specific weighted space. To this end, we recall the following:
\begin{lemma}\label{IL}
\sl
Let \( g \in L^\theta \cap L^s \), \( 1 \leq \theta < s \leq \infty \). Then there exists a constant \( c \), independent of \( g \), such that
\begin{equation}\label{II}
    \|g\|_{L^{r,1}} \leq c \|g\|_{s}^{\gamma} \|g\|_{\theta}^{1 - \gamma}, 
\end{equation}
provided that, for \( \gamma \in (0,1) \),
\begin{equation}\label{22}
    \frac{1}{r} = \frac{\gamma}{s} + \frac{1 - \gamma}{\theta}.
\end{equation}
\end{lemma}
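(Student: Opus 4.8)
The plan is to work directly with the non-increasing rearrangement $g^*$ and to exploit the integral expression of the Lorentz norm,
\[
\|g\|_{L^{r,1}} = \int_0^\infty t^{1/r}\,g^*(t)\,\frac{dt}{t} = \int_0^\infty t^{1/r-1}\,g^*(t)\,dt.
\]
The first ingredient I would record is the pointwise decay of the rearrangement coming from monotonicity: since $g^*$ is non-increasing and, by equimeasurability, $\|g\|_p^p = \int_0^\infty [g^*(\tau)]^p\,d\tau$ for $1\le p<\infty$, one has $t\,[g^*(t)]^p \le \int_0^t [g^*(\tau)]^p\,d\tau \le \|g\|_p^p$, hence
\[
g^*(t) \le t^{-1/p}\,\|g\|_p, \qquad t>0,
\]
applied for both $p=\theta$ and $p=s$. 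In the borderline case $s=\infty$ one simply uses $g^*(t)\le\|g\|_\infty$, which is the natural reading of the same bound with $1/s=0$.

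Next I would split the defining integral at a free parameter $T>0$ and apply the two decay bounds on the two pieces, choosing on each piece the exponent that guarantees integrability. Since $\theta<s$ forces $1/s<1/r<1/\theta$, setting $a:=1/r-1/s>0$ and $b:=1/\theta-1/r>0$, the estimate $g^*(t)\le t^{-1/s}\|g\|_s$ makes the integrand integrable near $0$, while $g^*(t)\le t^{-1/\theta}\|g\|_\theta$ makes it integrable near $\infty$. This yields
\[
\|g\|_{L^{r,1}} \le \|g\|_s\int_0^T t^{a-1}\,dt + \|g\|_\theta\int_T^\infty t^{-b-1}\,dt
= \frac{T^{a}}{a}\,\|g\|_s + \frac{T^{-b}}{b}\,\|g\|_\theta.
\]

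Finally I would balance the two summands in $T$. Choosing $T=(\|g\|_\theta/\|g\|_s)^{1/(a+b)}$ equalizes their homogeneity; a direct substitution, using $a+b=1/\theta-1/s$ together with the relation \eqref{22}, which is equivalent to $b=\gamma(a+b)$ and $a=(1-\gamma)(a+b)$, collapses both terms into a constant multiple of $\|g\|_s^{\gamma}\|g\|_\theta^{1-\gamma}$, giving
\[
\|g\|_{L^{r,1}} \le \Bigl(\tfrac1a+\tfrac1b\Bigr)\,\|g\|_s^{\gamma}\,\|g\|_\theta^{1-\gamma},
\]
with $c=\tfrac1a+\tfrac1b$ depending only on $r,s,\theta$ and not on $g$. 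No step presents a genuine obstacle; the only point requiring care is the bookkeeping of exponents—verifying through \eqref{22} that the balancing value of $T$ turns $\|g\|_s^{b/(a+b)}\|g\|_\theta^{a/(a+b)}$ into exactly $\|g\|_s^{\gamma}\|g\|_\theta^{1-\gamma}$—together with the treatment of the endpoint $s=\infty$, where the $L^s$-decay bound degenerates to the uniform bound $g^*\le\|g\|_\infty$.
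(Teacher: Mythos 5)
Your argument is correct, and it is genuinely different from what the paper does: the paper offers no proof of this lemma at all, deferring entirely to Lemma 3.2 of \cite{M_lorentz}. What you give is the standard real--interpolation argument made explicit and self-contained: the decay estimate $g^*(t)\le t^{-1/p}\|g\|_p$ coming from monotonicity and equimeasurability (with the degenerate reading $g^*\le\|g\|_\infty$ when $s=\infty$), the splitting of $\|g\|_{L^{r,1}}=\int_0^\infty t^{1/r-1}g^*(t)\,dt$ at a free threshold $T$, and optimization in $T$. The exponent bookkeeping checks out: with $a=\frac1r-\frac1s>0$ and $b=\frac1\theta-\frac1r>0$, condition \eqref{22} is equivalent to $b=\gamma(a+b)$ and $a=(1-\gamma)(a+b)$, so the balancing choice $T=\left(\|g\|_\theta/\|g\|_s\right)^{1/(a+b)}$ turns both terms into multiples of $\|g\|_s^{b/(a+b)}\|g\|_\theta^{a/(a+b)}=\|g\|_s^{\gamma}\|g\|_\theta^{1-\gamma}$, yielding \eqref{II} with the explicit constant $c=\frac1a+\frac1b$, which depends only on $\theta,s,\gamma$ as required. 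In effect you have proved the embedding $L^\theta\cap L^s\hookrightarrow L^{r,1}$ directly, which buys self-containedness and an explicit constant where the paper buys brevity by delegation to the literature. The only point worth adding, and it is trivial, is that the optimization presupposes $\|g\|_s\neq 0$; when $g=0$ a.e.\ the inequality holds vacuously.
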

\begin{proof}
See Lemma 3.2 in \cite{M_lorentz}.
\end{proof}

\begin{lemma} \label{interpol}
\sl
Let $g \in L^{\infty}(\R^n) \cap L^p_{w}(\R^n)$.There exists a positive constant $c$, independent of $g$, such that
\begin{equation}\label{stima_Lq}
\|g\|_q \leq c \|g\|_{w,p}^{\frac{n}{q}}\|g\|_{\infty}^{n(\frac{1}{n}-\frac{1}{q})}\,,
\end{equation}
for all $q>n$.
\end{lemma}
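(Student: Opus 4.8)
The plan is to prove the interpolation inequality \eqref{stima_Lq} by reducing it to the Lorentz-space inequality of Lemma \ref{IL}, with a careful choice of the exponents $\theta$ and $s$ that incorporates the weight $w$. The key observation is that the target estimate involves the \emph{weighted} $L^p$ norm $\|g\|_{w,p}$ on the right-hand side, whereas Lemma \ref{IL} is stated for unweighted Lebesgue norms; so the first task is to convert a weighted norm of $g$ into an unweighted norm of a modified function, or alternatively to control $\|g\|_q$ directly by $\|g\|_\infty$ and $\|g\|_{w,p}$.

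First I would apply Lemma \ref{IL} with the choice $\theta=p$ and $s=\infty$, so that \eqref{22} reads $\frac1r=\frac{1-\gamma}{p}$. Since $q>n$ and $p>n$, I would like to land at $r=q$; matching exponents forces $\gamma=1-\frac{p}{q}$, and then the embedding $L^{q,1}\hookrightarrow L^q$ (valid because the first Lorentz index dominates) gives $\|g\|_q\le c\|g\|_{L^{q,1}}\le c\|g\|_\infty^{\gamma}\|g\|_p^{1-\gamma}$. This is the \emph{unweighted} interpolation inequality. The genuine work is then to replace $\|g\|_p$ by the weighted norm $\|g\|_{w,p}$ and to reconcile the exponents with those stated in \eqref{stima_Lq}, namely $\frac nq$ on $\|g\|_{w,p}$ and $n(\frac1n-\frac1q)=1-\frac nq$ on $\|g\|_\infty$.

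To pass from $\|g\|_p$ to $\|g\|_{w,p}$ I would use H\"older's inequality in the form $\int|g|^p\,dx=\int (|g|\,w)^p w^{-p}\,dx$, splitting the factor $|g|^p=(|g|w)^p\cdot w^{-p}$ and estimating $w^{-p}=w'^{\,p'\cdot(p/p')}$ against a power of $\|g\|_\infty$ on the region where $w$ is small; more transparently, I would write $|g|^p=(|g|w)^{p-s_0}\,|g|^{s_0}\,w^{-(p-s_0)}$ for a suitable exponent and absorb the $L^\infty$ part, using that $w'=w^{-1}$ is locally integrable precisely because $w\in A_p$ and $\alpha=1-\frac np$ keeps the singular exponents summing correctly. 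Matching the homogeneity/scaling (the space $L^p_w$ is scaling invariant by construction, since $\sum\alpha_j=\alpha=1-\frac np$) pins down the exponents $\frac nq$ and $1-\frac nq$ uniquely, which is the consistency check one expects from a scaling-invariant framework.

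The main obstacle will be this last interpolation between the weighted and unweighted norms: one must verify that the weight $w$ (a product of powers of distances to finitely many points $\bar x_j$, not a single radial power) is integrable to the relevant negative power on the relevant sets, and that the $L^\infty$ bound compensates exactly for the loss. Here the scaling relation $\sum_j\alpha_j=1-\frac np$ and the fact that each $\alpha_j\ge 0$ with $\alpha_j<\frac np$ (so that $|x-\bar x_j|^{-\alpha_j p}$ is locally integrable) are what make the argument close; the multi-point structure is handled by the same bookkeeping used in Lemma \ref{lapes}, treating the singularities at the distinct $\bar x_j$ separately and using that away from all singularities $w$ is comparable to a fixed positive constant on bounded sets. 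I expect the clean route to be: reduce to the unweighted inequality via Lemma \ref{IL} and the Lorentz embedding, then invoke a weighted H\"older estimate whose validity rests on $w\in A_p$ and on the exponent constraint $\alpha=1-\frac np$, which simultaneously guarantees both integrability and the exact exponents $\frac nq$, $1-\frac nq$ appearing in \eqref{stima_Lq}.
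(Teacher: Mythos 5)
Your reduction has a genuine gap, and it is circular at the decisive point. Your step (a) is fine: applying Lemma \ref{IL} with $\theta=p$, $s=\infty$, $r=q$ and the embedding $L^{q,1}\hookrightarrow L^{q}$ does give the unweighted inequality $\|g\|_{q}\leq c\,\|g\|_{\infty}^{1-p/q}\|g\|_{p}^{p/q}$ (and $g\in L^{p}$ does hold qualitatively, since $\{w<1\}$ is bounded and $g\in L^{\infty}$ there, while $|g|\leq |g|w$ on the complement). But your step (b), the passage $\|g\|_{p}\lesssim \|g\|_{w,p}^{n/p}\|g\|_{\infty}^{1-n/p}$, is precisely the statement of the lemma for $q=p$ (which is admissible, as $p>n$): you have reduced the lemma to itself. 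Worse, the mechanism you propose for that step fails. Writing $|g|^{p}=(|g|w)^{p-s_0}|g|^{s_0}w^{-(p-s_0)}$, bounding $|g|^{s_0}\leq\|g\|_{\infty}^{s_0}$ and applying H\"older forces $s_0=p-n$ to match the target exponents, and then the weight factor to be controlled is $\int_{\R^n}w^{-np/(p-n)}\,dx$. Since $\alpha=1-\frac np=\frac{p-n}{p}$, at infinity $w^{-np/(p-n)}\sim |x|^{-\alpha np/(p-n)}=|x|^{-n}$, so this integral diverges (for $m=1$ it also blows up exactly like $|x-\bar x_1|^{-n}$ at the singular point). This is not an accident of your bookkeeping: scaling invariance of $L^p_w$ forces any such H\"older exponent to land exactly on the non-integrable borderline, so no choice of $s_0$ repairs it. Note also that the $A_p$ property of $w$ plays no role here; the relevant fact is the weak-type integrability $w^{-1}\in L^{\frac n\alpha,\infty}(\R^n)$.

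That weak-type fact is exactly how the paper closes the borderline case: it never passes through the unweighted $\|g\|_p$. Instead it estimates $\|g\|^q_{L^q(B_R)}=\int_{B_R}|g|^{q-1}\,w^{-1}\,|g|w\,dx$ directly by the H\"older (O'Neil) and multiplication theorems for Lorentz spaces, pairing $w^{-1}\in L^{\frac n\alpha,\infty}$, $|g|w\in L^{p,p}$ and $|g|^{q-1}\in L^{\frac{n}{n-1},1}$; the last factor is controlled by Lemma \ref{IL} applied to $|g|^{q-1}$ (with $r=\frac{n}{n-1}$, $\theta=\frac{q}{q-1}$, $s=\infty$), which reintroduces $\|g\|_{L^q(B_R)}^{(1-\gamma)(q-1)}$ on the right. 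One then divides by that power of $\|g\|_{L^q(B_R)}$ (a bootstrap, legitimate on the bounded ball), checks that the resulting exponents are $\frac nq$ and $n(\frac1n-\frac1q)$, and lets $R\to\infty$ by monotone convergence. If you want to salvage your route, you would have to prove your step (b) by the same Lorentz-space pairing, at which point you have reproduced the paper's argument for $q=p$ and gained nothing over doing it for general $q$ directly.
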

\begin{proof}
Under our hypotheses, $g\in L^q(B_R(O))$ for all $R>0$.\par
Taking in account Lemma \ref{IL}, choosing in \rf{II}
\begin{gather*}
r=\frac{n}{n-1}\,, \quad \theta=\frac{q}{q-1}\,, \quad s=\infty\,,
\end{gather*}
from \eqref{22}, we get 
$$\gamma=\frac{q-n}{n(q-1)}\,, \quad 1-\gamma=\frac{q(n-1)}{n(q-1)}\,,$$
where, in order to have $\gamma\in (0,1)$, we need $q>n$. Hence, substituting, we have
\begin{equation}\label{II2}
\dm|g|^{q-1}\dm_{L^{\frac{n}{n-1},1}(B_R)}\leq \dm |g|^{q-1}\dm_{L^{\frac{q}{q-1}}(B_R)}^{1-\gamma}\dm |g|^{q-1}\dm_{L^\infty(B_R)}^{\gamma}\leq \dm g\dm_{L^{q}(B_R)}^{(1-\gamma)(q-1)}\dm g\dm_{\infty}^{\gamma(q-1)}
\end{equation}
Taking this premise into account, we proceed to compute $\dm g\dm^q_{L^q(B_R)}$. Applying the H\"older's inequality for Lorentz spaces, we have
\begin{equation}\label{IF1}
\begin{array}{ll}
     \dm g\dm^q_{L^q(B_R)}=&\int_{B_R}|g|^q\,dx=\int_{B_R}|g|^{q-1}\,w^{-1}\,|g|w\leq c \|w^{-1}\|_{L^{\frac{n}{\alpha},\infty}(B_R)}\dm|g|^{q-1}|g|w\dm_{L^{\frac{n}{n-\alpha},1}(B_R)}\leq \\
     & c \, \|w^{-1}\|_{L^{\frac{n}{\alpha},\infty}(\R^n)}\dm|g|^{q-1}|g|w\dm_{L^{\frac{n}{n-\alpha},1}(B_R)}\leq c\, \dm|g|^{q-1}|g|w\dm_{L^{\frac{n}{n-\alpha},1}(B_R)} \,.
\end{array}
\end{equation}
where the norm $ \dm w^{-1}\dm_{L^{\frac{n}{\alpha},\infty}(\R^n)}$ is absorbed  into the constant $c$, since it is  finite and fixed.\\
Concerning the latter term, by the multiplication Theorem with
$$\frac{n-\alpha}{n}= \frac{1}{p}+\frac{n-1}{n}\,,\quad 1=\frac{1}{p}+\frac{1}{p\,'}\,,   $$
recalling 
$$\dm |g|^{q-1}\dm_{L^{\frac{n}{n-1},p' }(B_R)}\leq \dm|g|^{q-1}\dm_{L^{ \scriptstyle \frac{n}{n-1},1}(B_R)}\,, $$
we get
\begin{equation*}
\dm|g|^{q-1}|g|w\dm_{L^{\frac{n}{n-\alpha},1}(B_R)}\leq  c\,  \dm |g|w\dm_{L^{p,p}(B_R)}\dm |g|^{q-1}\dm_{L^{\frac{n}{n-1},p'}(B_R)}	
 \leq c \,\dm |g|w\dm_{L^{p,p}(B_R)}\dm |g|^{q-1}\dm_{L^{ \frac{n}{n-1},1}(B_R)}
\end{equation*}
Therefore, using the last relation in \eqref{IF1} , we conclude that
\begin{align*}
  \dm g\dm^q_{L^q(B_R)}\leq c\, \dm g\dm_{w,p}\,\dm|g|^{q-1}\dm_{L^{\frac{n}{n-1},1}(B_R)}\,,
\end{align*}
with $c$ independent of $R$. Consequently, 
\begin{equation*}
\dm g\dm^q_{L^q(B_R)}\leq c \dm g\dm_{w,p}\,\dm|g|^{q-1}\dm_{L^{\frac{n}{n-1},1}(B_R)}\,.
\end{equation*}   
Recalling \rf{II2}, we have
$$\dm g\dm^q_{L^q(B_R)}\leq c \dm g\dm_{w,p}\,\dm g\dm_{L^{q}(B_R)}^{(1-\gamma)(q-1)}\dm g\dm_{\infty}^{\gamma(q-1)}.$$
Dividing both sides by $\dm g\dm_{L^{q}(B_R)}^{(1-\gamma)(q-1)}$, and simplifying the exponents, we arrive at
\begin{equation*}
\dm g\dm_{L^q(B_R)}\leq c \dm g\dm_{w, p}^{\frac{q'}{q}(\frac{1}{q'-1+\gamma})}\, \dm g\dm_{\infty}^{\frac{\gamma}{q'-1+\gamma}}\,,
\end{equation*}
where
\begin{equation*}
\frac{q'}{q}\Big(\frac{1}{q'-1+\gamma}\Big)=\frac{n}{q }\,,\quad \frac{\gamma}{q'-1+\gamma}= n\Big(\frac{1}{n}-\frac{1}{q}\Big).
\end{equation*}
Applying Beppo Levi's Theorem, taking $R\to \infty$, we arrive at
\begin{equation*}
\dm g\dm_q \leq c \dm g\dm_{w,p}^{\frac{n}{q}}\,\dm g\dm_{\infty}^{n(\frac{1}{n}-\frac{1}{q})}\,,
\end{equation*}
which concludes the proof.
\end{proof}
\section{The Stokes Cauchy problem}\label{cauchy oroblem}
In this section, we consider  the Stokes Cauchy problem
\begin{equation}\label{Cproblem}
\begin{array}{ll}
u_t + \nabla \pi= \Delta u \,,\;&\text{in}\,\, (0,\infty) \times \mathbb{R}^n\,, \\
\nabla \cdot u=0\,, &\text{in}\,\, (0,\infty) \times \mathbb{R}^n, \\
u(0,x)=u_0(x)\,, 
\end{array}
\end{equation}
under the assumptions that $u_0 \in L^p_{w}(\mathbb{R}^n)$ and $\nabla \cdot u_0=0$ in the weak sense.\par
In \cite{MP}, the authors consider this problem with initial data in $L^p_{\alpha}(\R^n)=\{u\, :\, u|x|^{\alpha} \in L^p(\R^n)\}$ and, among other things, they establish results on the existence and uniqueness of a regular solution. In the present paper, we extend the results considering the more general weight function \eqref{wf}. To this end, we recall the following results.
\begin{lemma}\label{SHE}{\sl Let $\varphi_0\in \mathscr C_0(\R^n)$. Then we get a unique solution to problem \rf{Cproblem} such that
\be\label{RHE}\varphi(t, x)\in C^k(0,T;{\underset{q>1}\cap} J^q(\R^n))\cap({\underset{q>1}\cap} L^q(0,T;W^{2,q}(\R^n))\,,\mbox{ for }k\in\N_0\mbox{ and for all }T>0\,,\ee
and the representation formula holds:\be\label{RHEO} 
\varphi(t,x)=\int_{\R^n} H(t,x-y)\varphi_0(y)\,dy\,.
\ee
In particular, for all $r\geq q>1$, there exists a constant $c$ such that \be\label{SP}\dm \varphi(t)\dm_r+(t-s)^\frac12\dm\n \varphi(t)\dm_r\leq c(t-s)^{-\frac n2\left(\frac1q-\frac1r\right)}\dm \varphi(s)\dm_q\,,\mbox{ for all }t>s\geq0\,.\ee 
}\end{lemma}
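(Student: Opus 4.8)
The plan is to exploit the fact that, on the whole space, the Stokes system \eqref{Cproblem} collapses onto the heat equation whenever the data are solenoidal. First I would apply the divergence operator to the momentum equation in \eqref{Cproblem}; using $\n\cdot u=0$ together with $\n\cdot\Delta u=\Delta(\n\cdot u)=0$ one finds that $\pi(t,\cdot)$ is harmonic for every $t>0$. Since the solution we seek belongs to the class \eqref{RHE}, the gradient $\n\pi=\Delta u-u_t$ lies in $L^q(\R^n)$ for every $q>1$ and a.e.\ $t$; because a harmonic function whose gradient is $q$-integrable on $\R^n$ must be constant, this forces $\n\pi\equiv 0$. Equivalently, the Leray--Helmholtz projection $\mathbb{P}$ commutes with $\Delta$ on $\R^n$ and fixes divergence-free fields, so the Stokes operator reduces to $-\Delta$ on solenoidal data. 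In either formulation, \eqref{Cproblem} is equivalent to the Cauchy problem $\varphi_t=\Delta\varphi$, $\varphi(0)=\varphi_0$, with $\pi\equiv 0$.

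For existence I would simply \emph{define} $\varphi$ by the heat representation \eqref{RHEO}, that is $\varphi(t)=H(t)\ast\varphi_0$ with $H$ as in \eqref{Hrapp}. Since $\varphi_0\in\mathscr C_0(\R^n)$ is smooth and compactly supported while $H(t,\cdot)\in C^\infty$ for $t>0$, the standard smoothing of the heat kernel yields $\varphi\in C^\infty((0,\infty)\times\R^n)$ together with the full regularity asserted in \eqref{RHE}. Moreover $\n\cdot\varphi(t)=H(t)\ast(\n\cdot\varphi_0)=0$, because convolution with $H$ commutes with differentiation; hence $\varphi(t)$ is solenoidal and, being $q$-integrable, belongs to $J^q(\R^n)$ for every $q>1$. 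Thus the pair $(\varphi,0)$ solves \eqref{Cproblem}.

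The smoothing estimates \eqref{SP} then follow from Young's convolution inequality. Writing $\varphi(t)=H(t-s)\ast\varphi(s)$ for $t>s$ and choosing the exponent $\rho$ through $\frac1\rho=1+\frac1r-\frac1q$, one has $\dm\varphi(t)\dm_r\le\dm H(t-s)\dm_\rho\,\dm\varphi(s)\dm_q$, and the explicit computation $\dm H(\tau)\dm_\rho=c\,\tau^{-\frac n2(1-\frac1\rho)}=c\,\tau^{-\frac n2(\frac1q-\frac1r)}$ produces the first term of \eqref{SP}. For the gradient one uses $\n\varphi(t)=(\n H(t-s))\ast\varphi(s)$ together with the scaling $\dm\n H(\tau)\dm_\rho=c\,\tau^{-\frac12-\frac n2(\frac1q-\frac1r)}$, which supplies the extra factor $(t-s)^{-\frac12}$ and hence the second term.

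Finally, for uniqueness I would take two solutions in the class \eqref{RHE}, subtract them, and note that the difference $(\psi,\sigma)$ solves the homogeneous system with $\psi(0)=0$. The argument of the first paragraph applies verbatim to force $\n\sigma\equiv 0$, so $\psi$ solves the heat equation with vanishing data and therefore $\psi\equiv 0$ by uniqueness for $\varphi_t=\Delta\varphi$ in $\cap_{q>1}L^q(\R^n)$. I expect the only genuinely delicate point to be this pressure-vanishing step: one must justify both the harmonicity of $\pi$ and the Liouville-type conclusion $\n\pi\equiv0$ within the integrability class \eqref{RHE}, since everything else reduces to classical heat-semigroup estimates.
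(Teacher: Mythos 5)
Your proposal is correct and follows essentially the same route as the paper: the paper's proof simply cites the classical results of \cite{LSU} for existence, uniqueness and the representation formula \eqref{RHEO}, and then, exactly as you do, obtains the smoothing estimate \eqref{SP} by applying Young's convolution inequality to \eqref{RHEO}. The only difference is that you make the classical part self-contained --- deriving the harmonicity of the pressure, invoking the Liouville-type argument to force $\nabla\pi\equiv 0$ within the class \eqref{RHE}, and thereby reducing \eqref{Cproblem} to the heat equation --- which is precisely the standard argument underlying the reference the paper points to.
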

\bp 
The existence, uniqueness and properties \rf{RHE}-\rf{RHEO} are classical results, and we refer to monograph \cite{LSU}. Concerning \rf{SP}, it also is classic, in any case it is an easy consequence of Young's Theorem applied to \rf{RHEO}.  \ep
The solutions introduced in the previous Lemma will hereafter be referred to as regular solutions.
\begin{lemma}\label{Linf_GM}
\sl{
Let $g\in L^p_w(\R^n)$. Then for the convolution product $H \ast g(t,x)$ the following estimate holds
\begin{equation}\label{GMELinf}
    t^{\frac{1}{2}}\dm H \ast g(t,x)\dm_{\infty} \leq c\, \dm g\dm_{w,p}\,,\qquad \mbox{for all }t>0\,.
\end{equation}
}
\end{lemma}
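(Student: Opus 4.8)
The plan is to bound the convolution pointwise by a generalized Hölder (multiplication) inequality in Lorentz spaces, using only the integrability of the dual weight $w'$ that was already exploited in the proof of Lemma~\ref{interpol}. For fixed $t>0$ and $x\in\R^n$ I would write
\[
H*g(t,x)=\int_{\R^n}H(t,x-y)\,w'(y)\,\bigl(g(y)\,w(y)\bigr)\,dy ,
\]
and read the integrand as the product of the three factors $H(t,x-\cdot)$, $w'$ and $gw$. The factor $gw$ lies in $L^p=L^{p,p}(\R^n)$ with $\dm gw\dm_{L^{p,p}(\R^n)}=\dm g\dm_{w,p}$, while $w'$ lies in $L^{\frac n\alpha,\infty}(\R^n)$; its norm there is finite and fixed (it is precisely the quantity $\dm w^{-1}\dm_{L^{\frac n\alpha,\infty}(\R^n)}$ used in \eqref{IF1}).

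The exponents then close exactly. For the first Lorentz indices one needs
\[
\frac{n-1}{n}+\frac{\alpha}{n}+\frac1p=1 ,
\]
which holds because $\alpha=1-\frac np$ gives $n-1+\alpha=n/p'$, and for the second indices one needs $\frac1{p'}+\frac1\infty+\frac1p=1$, true by conjugacy. Hence the remaining factor $H(t,x-\cdot)$ is to be measured in $L^{\frac{n}{n-1},p'}(\R^n)$, and the Lorentz multiplication theorem gives
\[
|H*g(t,x)|\le c\,\dm H(t,x-\cdot)\dm_{L^{\frac{n}{n-1},p'}(\R^n)}\,\dm w'\dm_{L^{\frac n\alpha,\infty}(\R^n)}\,\dm g\dm_{w,p}.
\]

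It then remains to read off the $t$-power from the heat kernel. Writing $H(t,x)=t^{-n/2}G(x/\sqrt t)$ with $G(z)=(4\pi)^{-n/2}e^{-|z|^2/4}$ a fixed Schwartz function, the behaviour of the Lorentz quasinorm under the dilation $x\mapsto x/\sqrt t$ yields $\dm H(t,\cdot)\dm_{L^{\frac{n}{n-1},p'}}=t^{-\frac n2\left(1-\frac{n-1}{n}\right)}\dm G\dm_{L^{\frac{n}{n-1},p'}}=t^{-1/2}\dm G\dm_{L^{\frac{n}{n-1},p'}}$, because $1-\frac{n-1}{n}=\frac1n$ and $G\in L^{\frac{n}{n-1},p'}(\R^n)$. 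Since the two remaining factors are constants independent of $t$ and $x$, this gives $t^{1/2}|H*g(t,x)|\le c\,\dm g\dm_{w,p}$ uniformly in $x$, that is \eqref{GMELinf}.

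The delicate points are (i) the global membership $w'\in L^{\frac n\alpha,\infty}(\R^n)$ and (ii) the exact cancellation producing the power $t^{-1/2}$. For (ii) the relation $\alpha=1-\frac np$ does all the work. Point (i) is the genuine obstacle: near each $\ov{x}_j$ the factor $|x-\ov{x}_j|^{-\alpha_j}$ is of weak type $\frac n{\alpha_j}$ with $\alpha_j\le\alpha$, while at infinity $w'(x)\sim|x|^{-\alpha}$ is exactly weak-$L^{\frac n\alpha}$, and one must check that these several non-centred singularities combine into a single finite $L^{\frac n\alpha,\infty}$-norm. Should one prefer to avoid the Lorentz multiplication theorem, an equivalent route is to apply ordinary Hölder, $|H*g(t,x)|\le\dm H(t,x-\cdot)\,w'\dm_{p'}\,\dm g\dm_{w,p}$, and to estimate $\dm H(t,x-\cdot)\,w'\dm_{p'}\le c\,t^{-1/2}$ directly: decomposing $\R^n$ into the nearest-point cells of $\{\ov{x}_j\}$, on the cell of $\ov{x}_k$ one has $w'(y)^{p'}\le|y-\ov{x}_k|^{-\alpha p'}$, and each resulting single-pole Gaussian integral is handled by the rescaling $y\mapsto\sqrt t\,y$ together with the same identity $\alpha=1-\frac np$.
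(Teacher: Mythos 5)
Your proof is correct and is essentially the paper's own: the paper disposes of this lemma by citing Lemma 6 of \cite{GM}, whose argument is exactly the Lorentz--H\"older (O'Neil) estimate you reconstruct --- heat kernel in $L^{\frac{n}{n-1},p'}$, weight in weak-$L^{\frac n\alpha}$, $gw$ in $L^{p}$, with the dilation computation giving $t^{-1/2}$ --- adapted precisely by ``noting that $w^{-1}\in L^{\frac{n}{\alpha},\infty}(\R^n)$'', which is the membership you single out as point (i). That membership is not a real obstacle: it follows from the H\"older inequality in weak Lebesgue spaces applied to the factors $|x-\ov{x}_j|^{-\alpha_j}\in L^{\frac{n}{\alpha_j},\infty}(\R^n)$, since $\sum_j \frac{\alpha_j}{n}=\frac{\alpha}{n}$, so your argument closes exactly as the paper intends.
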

\begin{proof}
The proof follows the same line of argument as in Lemma 6 of \cite{GM}, noting that $w^{-1}\in L^{\frac{n}{\alpha},\infty}(\R^n)$
\end{proof}
Adopting the approach employed in \cite{MP}, we prove the following 
\begin{lemma}\label{SDS}
\sl{Let $\varphi_0\in \mathscr{C}_0(\mathbb{R}^n)$. Then, there exists a unique regular solution $\vp(t,x)$ to the problem \rf{Cproblem} with initial datum $\varphi_0$ such that
\begin{equation}\label{stima_per_convergenza}
\dm \vp(t)\dm_{L^{p'}_{w'}(\R^n)} \leq \dm \vp_0\dm_{L^{p'}_{w'}(\R^n)}\,,
\end{equation}
with $p'\in \Big(1,\frac{n}{n-1}\Big),$ and for all $t> 0$.
Moreover, the following property holds
\be\label{FTS}
\lim_{t\to0^+}\dm \varphi(t)-\varphi_0\dm _{L^{p'}_{w'}(\R^n)}=0\,.
\ee
}
\end{lemma}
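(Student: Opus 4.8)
The plan is to reduce everything to the heat semigroup and then exploit the sign of $\Delta(w'^{\,p'})$ from Lemma~\ref{lapes}. Since $\varphi_0\in\mathscr C_0(\R^n)$ is divergence free, on the whole space the pressure vanishes and Lemma~\ref{SHE} already supplies \emph{the} unique regular solution $\varphi=H\ast\varphi_0$ through the representation formula \eqref{RHEO}; in particular each component solves $\partial_t\varphi_i=\Delta\varphi_i$, and $\varphi(t,\cdot)$ together with all its derivatives decays faster than any polynomial as $|x|\to\infty$. Thus existence and uniqueness are inherited from Lemma~\ref{SHE}, and the whole task reduces to the two displayed properties \eqref{stima_per_convergenza} and \eqref{FTS}.

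For \eqref{stima_per_convergenza} I would set $\psi:=w'^{\,p'}$ and differentiate the weighted energy $t\mapsto\int_{\R^n}|\varphi(t)|^{p'}\psi\,dx$. Using $\partial_t\varphi_i=\Delta\varphi_i$ and integrating by parts twice, the time derivative splits as
\[
\frac{d}{dt}\int_{\R^n}|\varphi|^{p'}\psi\,dx=-p'\int_{\R^n}\partial_k(|\varphi|^{p'-2}\varphi_i)\,\partial_k\varphi_i\,\psi\,dx+\int_{\R^n}|\varphi|^{p'}\Delta\psi\,dx .
\]
The first integrand is bounded below pointwise by $(p'-1)\,|\varphi|^{p'-2}|\nabla\varphi|^2\ge0$, which follows from the Cauchy--Schwarz bound $\sum_k\big(\sum_i\varphi_i\partial_k\varphi_i\big)^2\le|\varphi|^2|\nabla\varphi|^2$ together with $p'-1>0$; since $\psi>0$, the first term is $\le0$. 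The second term is $\le0$ precisely because Lemma~\ref{lapes} gives $\Delta\psi=\Delta(w'^{\,p'})\le0$, cf.\ \eqref{lapneg}. Hence the weighted energy is non-increasing on $(0,\infty)$, so $\dm\varphi(t)\dm_{L^{p'}_{w'}}\le\dm\varphi(s)\dm_{L^{p'}_{w'}}$ for $0<s\le t$, and combining with the strong continuity at the origin established below (letting $s\to0^+$) yields \eqref{stima_per_convergenza}.

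I expect the main obstacle to be making the two integrations by parts rigorous, for two independent reasons: the factor $|\varphi|^{p'-2}$ is singular where $\varphi$ vanishes (since $p'<2$), and $\psi,\nabla\psi,\Delta\psi$ are singular at the points $\bar x_j$. I would regularize $|\varphi|$ by $(|\varphi|^2+\varepsilon^2)^{1/2}$ and carry out all computations on $\R^n\setminus\bigcup_j B_\rho(\bar x_j)$, then check that the boundary integrals on $\partial B_\rho(\bar x_j)$ vanish as $\rho\to0$. This is exactly where $n\ge3$ is used: near $\bar x_j$ one has $\psi\sim|x-\bar x_j|^{-\alpha_jp'}$, $\nabla\psi\sim|x-\bar x_j|^{-\alpha_jp'-1}$, $\Delta\psi\sim|x-\bar x_j|^{-\alpha_jp'-2}$ with $\alpha_jp'<n-2$, so all are locally integrable and the spherical boundary terms, of order $\rho^{\,n-1}\cdot\rho^{-\alpha_jp'-1}=\rho^{\,n-2-\alpha_jp'}$, tend to $0$; no contribution arises at infinity thanks to the Gaussian decay of $\varphi$ against the polynomially controlled weight. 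Passing $\varepsilon\to0$ and $\rho\to0$ then gives the identity above.

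Finally, for \eqref{FTS} I would split $\dm\varphi(t)-\varphi_0\dm_{L^{p'}_{w'}}^{p'}=\int_{B_R}+\int_{B_R^c}$, with $R$ larger than $\mathrm{supp}\,\varphi_0$. On $B_R$ the weight $w'^{\,p'}$ is integrable while $\varphi(t)\to\varphi_0$ uniformly (by uniform continuity of $\varphi_0$ applied to \eqref{RHEO}), so $\int_{B_R}|\varphi(t)-\varphi_0|^{p'}w'^{\,p'}\,dx\le\dm\varphi(t)-\varphi_0\dm_\infty^{p'}\,\|w'^{\,p'}\|_{L^1(B_R)}\to0$ for each fixed $R$. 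On $B_R^c$ one has $\varphi_0=0$ and $w'^{\,p'}$ bounded, while a direct Gaussian estimate of $\varphi(t,x)=\int H(t,x-y)\varphi_0(y)\,dy$ gives $\sup_{0<t\le1}\int_{|x|>R}|\varphi(t)|^{p'}\,dx\le C\,R^{-n(p'-1)}$, uniformly small for large $R$. Choosing $R$ large and then $t$ small closes \eqref{FTS}, and since this argument does not use the monotonicity it legitimately supplies the endpoint limit needed to complete \eqref{stima_per_convergenza}.
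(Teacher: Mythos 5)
Your proposal is correct, and its core coincides with the paper's argument for \eqref{stima_per_convergenza}: both test the equation with (a regularization of) $|\varphi|^{p'-2}\varphi\,w'^{\,p'}$, integrate by parts, discard the nonnegative gradient terms (the same Cauchy--Schwarz observation, valid since $1<p'<2$), and invoke the sign $\Delta(w'^{\,p'})\le 0$ of Lemma \ref{lapes} to conclude monotonicity of the weighted norm. The differences are twofold. First, in the technical bookkeeping: the paper regularizes with $(\sigma^2+|\varphi|^2)^{1/2}$ and a cutoff $h_R$ at infinity, then passes $\sigma\to0$ and $R\to\infty$ by dominated convergence, while you excise balls $B_\rho(\bar x_j)$ around the singularities of the weight and rely on the Gaussian decay of $\varphi$ at infinity; your version has the merit of checking explicitly that the boundary terms near $\bar x_j$ vanish (using $\alpha_j p'\le \alpha p'=(p-n)/(p-1)<1\le n-2$), a point the paper leaves implicit. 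Second, and this is a genuinely different route, for \eqref{FTS} the paper tests the equations to get weak continuity $(\varphi(t),\phi)\in C((0,T))$ and then upgrades to strong $L^{p'}_{w'}$-convergence by combining weak convergence with the norm bound \eqref{stima_per_convergenza} (a Radon--Riesz/uniform-convexity type argument), whereas you prove \eqref{FTS} directly from the representation \eqref{RHEO} by splitting into $B_R$ (uniform convergence of the heat semigroup against an integrable weight) and $B_R^c$ (uniform-in-time Gaussian tail bound against a bounded weight). Your argument is more elementary and self-contained, and it makes the logical order explicit: \eqref{FTS} is obtained independently of the monotonicity, so it can legitimately be used to pass $s\to0^+$ and close \eqref{stima_per_convergenza}; the paper's route is shorter but leans on functional-analytic properties of the weighted $L^{p'}$ space.
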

\bp
The existence and uniqueness of a regular solution $(\varphi(t,x),\pi_{\varphi}(t,x))$, with $\pi_{\varphi}(t,x)\equiv c(t)$, are ensured by previous Lemma. So, we limit ourselves to proving property \eqref{stima_per_convergenza}.\par
Following the proof of Theorem 4 in \cite{MP}, we consider a smooth cutoff function  $h_R \in [0,1]$ with
\begin{equation*}
h_R =
\begin{cases}
1 \quad \text{if}\,\, |x|\leq R \,,\\
0 \quad \text{if}\,\, |x|\geq 2R\,,
\end{cases}
\end{equation*}
and $\nabla h_R=O(\frac{1}{R})$. Multiplying   the first equation in \eqref{Cproblem} by $h_R\,\varphi(\sigma^2+|\varphi|^2)^{\frac{p'-2}{2}}w^{\,\alpha'  p'}$, with $\sigma>0$, by means of an integration by parts on $\R^n$, we obtain
\be\label{int_parti_SE}\hskip-0.2cm\ba{ll}\displ
&\displ\sfrac{d}{dt}\dm h_R^\frac1{p'\hskip-0.1cm}(|\varphi(t)|^2+\sigma^2)^{\frac{1}{2}}\dm _{w' , p'}^{p'}+ p'\!\intl{\R^n}\!h_R\,|\nabla \varphi(t,x)|^2(\sigma^2+|\varphi(t, x)|^2)^{\frac{p'-2}{2}}w^{\,\alpha' p'}\,dx   \\
&\displ\hskip3.1cm+ p'(p'-2)\!\intl{\R^n}h_R\Big(\nabla \varphi(t,x)\varphi(t,x) \Big)^2(\sigma^2+|\varphi(t, x)|^2)^{\frac{p'-4}{2}}w^{\,\alpha' p'} dx \\
&\displ\hskip3.6cm=-I_R(t,\sigma)+\intl{\R^n}\!h_R(\sigma^2\hskip-0.1cm+|\varphi(t, x)|^2)^{\frac{p'-2}{2}}\Delta w^{\,\alpha'  p'}dx ,
\ea\ee where we set
$$I_R(t,\sigma):= \intl{\R^n}w^{\,\alpha'p'}(\sigma^2+|\varphi|^2)^\frac{p'}2 \Delta h_R dx+2 \intl{\R^n} \n h_R\cdot\n w^{\,\alpha'p'}(\sigma^2+|\varphi|^2)^{\frac{p'}{2}}dx\,.$$
Using \eqref{lapneg} in \rf{int_parti_SE}, we have
\begin{equation*}
\hskip-0.2cm\ba{ll}\displ
&\displ\sfrac{d}{dt}\dm h_R^\frac1{p'\hskip-0.1cm}(|\varphi(t)|^2+\sigma^2)^{\frac{1}{2}}\dm _{w' , p'}^{p'}+ p'\!\intl{\R^n}\!h_R\,|\nabla \varphi(t,x)|^2(\sigma^2+|\varphi(t, x)|^2)^{\frac{p'-2}{2}}w^{\alpha' p'}\,dx   \\
&\displ\hskip3.1cm+ p'(p'-2)\!\intl{\R^n}h_R\Big(\nabla \varphi(t,x)\varphi(t,x) \Big)^2(\sigma^2+|\varphi(t, x)|^2)^{\frac{p'-4}{2}}w^{\alpha' p'} dx \\
&\displ\hskip0.5cm\leq -I_R(t,\sigma)+[\alpha' p'+n-2]\sum_{h=1}^{m}\alpha'_{h}p'  \!\intl{\R^n}\!h_R(\sigma^2\hskip-0.1cm+|\varphi(t, x)|^2)^{\frac{p'-2}{2}}\prod_{j=1}^m |x-x_j|^{\alpha'_{j}p'-2\delta_{hj}}dx .
\ea\end{equation*}
Integrating both sides of \rf{int_parti_SE} on $(0,t)$, recalling \eqref{RHE} for $q=p'$ and letting $\sigma \to 0$ first and subsequently $R\to \infty$, by Lebesgue's dominated convergence Theorem, we have:
\begin{align*}
&\dm \varphi(t)\dm _{w' , p'}^{p'}+ p' \int_0^t\intl{\R^n}|\nabla \varphi(t,x)|^2|\varphi(t,x)|^{p'-2}w^{\alpha' p'}\,dx\,d\tau+ \\ 
& \hskip3.1cm+ p'(p'-2)\int_0^t\intl{\R^n}\Big(\nabla \varphi(t,x)\varphi(t,x) \Big)^2|\varphi(t,x)|^{p'-4}w^{\alpha' p'}\, dx\\
&\hskip3cm\leq[\alpha' p'+n-2]\sum_{h=1}^{m}\alpha'_{h}p' \int_0^t \intl{\R^n}|\varphi|^{p'}\prod_{j=1}^m |x-x_j|^{\alpha'_{j}p'-2\delta_{hj}}\,dx + \dm \varphi_0\dm^{p'}_{w',p'}.
\end{align*}
By \eqref{lapneg} again, the first term on the right-hand side  in the last equation is a negative quantity. So, for all $t>0$, we can deduce that
\begin{equation*}
\dm \varphi(t)\dm_{w',p'}\leq \dm\varphi_0\dm _{w',p'}
\end{equation*}
for $p' \in (1, \frac{n}{n-1})$ and for all $t>0$. \par
Finally, testing the equations of the problem \rf{Cproblem} in $\varphi(t, x)$, one easily achieves the weak continuity, that is $(\varphi(t),\phi)\in C((0,T))$, for all $\phi\in L^p_w(\R^n)$. Hence, employing  estimate \rf{stima_per_convergenza}, one deduces the property \rf{FTS}.
\ep
We are now in a position to prove the result corresponding to Theorem 4 in \cite{MP} in $L^p_w(\R^n)$-setting, where $w$ is given by \eqref{wf}.
\begin{theorem}\label{SPWS}
\sl{
Let consider $n\geq 3$, $p>n$. For all $u_0 \in J_w^p(\R^n)$ there exists a unique  smooth solution $u(t,x)$ to the Cauchy problem \eqref{Cproblem} such that
\begin{equation}\label{DCWS}
\dm u(t)\dm _{L^{p}_w(\R^n)}\leq \dm u(s)\dm _{L^{p}_w(\R^n)}\,,\mbox{ for all }t>s \geq 0\,.
\end{equation}
Moreover, there exists a constant $c$ independent of $u_0$ and of $s\geq0$, such that for all $l,k \in \N_0$ and for $q \in (n,\infty)$ the following estimate holds
\begin{equation}\label{gradienti_eq_stokes_beta_q}
\dm D_t^l\nabla^k u(t)\dm _{q}\leq c(t-s)^{-\frac{k}{2}-l-\frac{n}{2}({\frac{1}{n}}-\frac{1}{q})}\dm u(s)\dm _{L^{p}_w(\R^n)}\,, \text{ for all } t>0\,.
\end{equation} 
Furthermore, the following limit property holds
\begin{equation}\label{convergenza_parte_lineare}
\lim_{t \to 0^+} \dm u(t) - u_0\dm _{L^{p}_w(\R^n)} = 0\,.
\end{equation} 
  Finally,  for all $T>0$ and $q>n$, we get \be\label{CRHE}u\in C([0,T);J^p_w(\R^n))\cap C((\vep,T); L^q(\R^n)),\mbox{ for all }\vep>0\,.\ee
  }
\end{theorem}
\begin{proof}
The proof is exactly as in \cite{MP}, for the sake of completeness, we reproduce it below. We consider
\be\label{FRSU}u(t,x)=\int_{\R^n} H(t,x-y)u_0(y)\,dy\,,
\ee
the solution to problem \eqref{Cproblem} with initial datum $u_0\in \mathscr C_0(\R^n)$. By the H\"older's inequality and by estimate \eqref{stima_per_convergenza}, we get
\be\label{DALQ}\ba{ll}
(u(t),\varphi_0)\hskip-0.2cm&=\!\displ \int_{\R^n}\!\!\Big[\int_{\R^n}\!\!\!H(t,x-y)u_0(y)\,dy\Big]\varphi_0(x)dx=\!\int_{\R^n}\!\!\Big[\int_{\R^n}\!\!\!H(t,x-y)\varphi_0(x)\,dx\Big]u_0(y)dy\VS
&=\displ \int_{\R^n}\varphi(t,y)u_0(y)dy\leq \dm \varphi(t)\dm _{w',p'}\dm u_0\dm _{w,p}\leq \dm \varphi_0\dm _{w',p'}\dm u_0\dm _{w,p}\,,
\ea\ee
from which we obtain:
\begin{equation}\label{stima_alfa_p}
\dm u(t)\dm _{w,p}\leq \dm u_0\dm _{w,p}\,,
\end{equation}
for $p>n$, and for all $t>0$. Testing \rf{Cproblem} in $u(t, x)$, one deduces the weak continuity, and employing \rf{stima_alfa_p}, we get 
    $$\lim_{t\to0}\dm u(t)-u_0\dm_{w, p}=0\,.$$ Differentiating  \rf{FRSU}, and then employing the duality argument used in \rf{DALQ} for $u_t$, for all $t>0$, we also get
\be\label{RGE-I}\dm u_t(t)\dm_{w,p}\leq \dm \Delta u_0\dm_{w,p}\,.\ee
Hence, as a consequence of estimates \rf{stima_alfa_p}, \rf{RGE-I}, and of regularity of $u(t, x)$, we get \be\label{RGE-III}u\in C([0,T);L^{p}_{w}(\R^n))\,,\mbox{ for all }T>0\,.\ee
{{Now, we need to estimate $D_t^l \nabla_x^k u(t,x)$. Firstly, we observe that by virtue of Lemma \ref{interpol}, estimates \eqref{GMELinf} and \eqref{stima_alfa_p} , for all $q \in (n,\infty)$ and $t>0$, we have:
\be\label{SLQ}\dm u(t)\dm_{q}\leq c \dm u(t) \dm^{1-\gamma}_{w,p} \dm u(t)\dm^{\gamma}_{\infty}\leq c t^{-\frac{n}{2}(\frac{1}{n}-\frac{1}{q})}\dm u_0\dm_{w,p}\ee
Now, recalling the well known estimate 
\begin{equation} \label{usare_per_conv_2}
\dm D_t^l\nabla^k u(t-\sigma)\dm _{q}\leq c (t-\sigma)^{-\frac{k}{2}-l} \dm u(\sigma)\dm_q\,,  
\end{equation}
for all $l,k \in \N_0$ and $t>\sigma\geq0$,}} choosing $\sigma=\frac t2$ and then employing \rf{SLQ}, we arrive at 
\be\label{LQ-LW}\dm D_t^l\nabla^k u(t)\dm _{q}\leq ct^{-\frac{k}{2}-l-\frac{n}{2}(\frac{1}{n}-\frac{1}{q})}\dm u_0\dm _{w, p}\,.\ee 
At this point, we consider $u_0\in J_{w}^p(\R^n)$. There exists a sequence $\{u_0^m\}\subset \mathscr{C}_0(\R^n)$ such that $u_0^m \to u_0$ in $J_{w}^p(\R^n)$ and we denote by $\{u^m(t,x)\}$ the sequence of smooth solutions to the Cauchy problem \rf{Cproblem}.
By virtue of the linearity of the problem  and \eqref{LQ-LW},  we also  get:
\begin{gather*}
\dm u^m(t)-u^{\nu}(t)\dm _{w, p}\leq \dm u_0^m-u_0^{\nu}\dm _{w,p}\\
\dm D_t^l\nabla^k u^m(t)-D_t^l\nabla^k u^{\nu}(t)\dm _{q}\leq ct^{-\frac{k}{2}-l-\frac{n}{2}({\frac{1}{n}}-\frac{1}{q})}\dm u^m_0-u^{\nu}_0\dm _{w, p},
\end{gather*}
which ensure the existence of a limit $u(t,x)$ uniformly in $t>0$ and $u$ enjoys \rf{CRHE}.\\
Finally, by \rf{CRHE}, together with \eqref{stima_alfa_p} and \eqref{LQ-LW}, one easily proves \eqref{DCWS} and \eqref{gradienti_eq_stokes_beta_q} for all $t>s\geq0$. 
\par[{\it Uniqueness}] As it is usual for the $L^p$-theory, also in our special $L^p$-spaces,  we use a duality argument to state   the uniqueness. Let  $(u(t,x), \pi_u(t,x))$ and $(v (t,x), \pi_v(t,x))$ be two solutions to the Cauchy Stokes problem \eqref{Cproblem} with initial datum $u_0$. We set $w(t,x): = u (t,x)-v (t,x)$ and $\pi_w(t, x):=\pi_u(t, x)-\pi_v(t, x)$. Then, $w$ is a solution to following problem:
\begin{equation} \label{eq_w1}
\begin{array}{ll}
w_t - \Delta w = -\n\pi_w\qquad &\text{in} \quad (0,\infty)\times \R^n\,, \\
\n \cdot w(t,x)=0 \qquad &\text{in} \quad (0,\infty)\times \R^n \,,\\
w(0,x)=0\qquad &\text{on}\quad  \{0\}\times \R^n.
\end{array}
\end{equation}
Let $\varphi_0 \in \mathscr{C}_0(\R^n)$ 
and let $(\varphi(t,x),c(t))$ be a solution to the Stokes Cauchy problem \eqref{Cproblem} corresponding to $\varphi_0$ enjoying \rf{RHE}, \rf{stima_per_convergenza} and  \rf{FTS}. We define:
\begin{equation*}
\widehat{\varphi}(\tau,x) = \varphi(t-\tau,x)\,, \qquad \text{for} \,(\tau,x) \in (0,t)\times \R^n\,.
\end{equation*}
As it is known $\widehat{\varphi}$ is a solution backward in time on $(0,t)\times\R^n$ with $
\widehat{\varphi}(t,x) = {\varphi}_0(x) $ on $  \{t\}\times \R^n
$.
Multiplying the first equation in  \eqref{eq_w1} for $\widehat{\varphi}$ and integrating by parts on $[s,t] \times \R^n$, we obtain:
\begin{equation*}
(w(t),\varphi_0) = (w(s),\varphi(t-s))\,,
\end{equation*}
from which 
\begin{equation*}
|(w(t),\varphi_0)| = |(w(s),\varphi(t-s))| \leq \dm w(s)\dm _{w,p}\dm \varphi(t-s)\dm _{w',p'}\,.
\end{equation*}
Using \eqref{stima_per_convergenza},
we get 
\begin{equation*}
|(w(t),\varphi_0)| \leq \dm w(s)\dm _{w,p} \dm \varphi_0\dm _{w',p'}\,.
\end{equation*}
The second term tends to $0$ as $s \to 0$, so $$(w(t),\varphi_0) = 0\,.$$
Due to the arbitrariness of $\varphi_0$ and $t$, the last relation implies that $w = 0 $ on $\{t\} \times \R^n$ for all  $t>0$.
This completes the proof. \end{proof}
\section{The IBVP of the Stokes system in the half-space}\label{IBVP problem}
We consider the problem \eqref{problem}, which has been extensively studied in the framework of classical Lebesgue spaces. In particular, we recall the results due to Solonnikov in \cite{sol}, \cite{soln}. In the aforementioned works, the author deduces the Green function for this problem as $\mathcal{G} = (G_{ij})_{i,j=1,...,n}$, with
$$G_{ij}:= \delta_{ij}(H(x-y,t)-H(x-y^*,t)) + 4(1-\delta_{jn})\dfrac{\partial
}{\partial x_j} \int_0^{x_n}\int_{\R^{n-1}} \dfrac{\partial}{\partial x_i} \mathcal{E}(x-y) H(y-z^*,t)\,dz,$$
and $\mathcal{P}=(P_{j})_{j=1,\dots,n}$ defined by the formula
\begin{align*}
  P_j(x,y,t)&=4\nu (1-\delta_{jn}) \frac{\partial}{\partial x_j}\Big[\int_{\R^{n-1}}\frac{\partial}{\partial x_n}\mathcal{E}(x'-z',x_n)\, H(z'-y',y_n,t)\, dz'\\[5pt]
  &+ \int_{\R^{n-1}}\frac{\partial}{\partial y_n}\mathcal{E}(x'-z',x_n)({x'-z',x_n})\,H(z'-y',y_n,t)\, dz'\Big]\,,
\end{align*}
where $y^*=(y_1,\dots, -y_n)$, $y'=(y_1,\dots,y_{n-1})$, and similarly for $z^*$, $z'$ and $x'$. Moreover, he has shown that the unique solution $((v(t,x),\pi(t,x))$, corresponding to an initial datum $v_0 \in \mathscr{C}_0(\R^n_+)$, can be expressed by the following representation formulae 
\begin{gather}
        v(t,x)=\int_{\R^n_+}\mathcal{G}(x,y,t)v_0(y) \, dy , \label{IRv}\\  
    \pi(t,x)=\int_{\R^n_+}\mathcal{P}(x,y,t)v_0(y) \, dy.
\end{gather}

\subsection{Proof of the main Theorem}
We first provide the following lemma, which is analogous to Lemma \ref{SDS} in the half-space case.
\begin{lemma}
\sl{
Let $\varphi_0\in \mathscr{C}_0(\mathbb{R}^n_+)$. Then, there exists a unique regular solution $\vp(t,x)$ to the problem \rf{problem} with initial datum $\varphi_0$ such that
\begin{equation}\label{DPRSHS}
\dm \vp(t)\dm_{L^{p'}_{w'}(\R^n_+)} \leq c\dm \vp_0\dm_{L^{p'}_{w'}(\R^n_+)}\,,
\end{equation}
with  $p' \in (1,\frac{n}{n-1})$, and for all $t\geq0$.
Moreover, the following property holds
\be\label{FTSHP}
\lim_{t\to0^+}\dm \varphi(t)-\varphi_0\dm _{L^{p'}_{w'}(\R^n_+)}=0\,.
\ee
}
\end{lemma}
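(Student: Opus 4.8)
The plan is to mirror the proof of Lemma~\ref{SDS}, now on the half-space and in the presence of a genuine pressure. Existence, uniqueness and smoothness of the regular solution $\varphi$ with datum $\varphi_0\in\mathscr{C}_0(\R^n_+)$ are guaranteed by Solonnikov's theory together with the representation formula \eqref{IRv}; thus the real content is the a priori bound \eqref{DPRSHS} and the convergence \eqref{FTSHP}. For the bound I would test the momentum equation $\varphi_t+\nabla\pi_\varphi=\Delta\varphi$ against
\[
\Phi:=h_R\,\varphi\,(\sigma^2+|\varphi|^2)^{\frac{p'-2}{2}}\,w'^{\,p'},\qquad \sigma>0,
\]
with $h_R$ the same cut-off as in Lemma~\ref{SDS}, and integrate over $\R^n_+$. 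The time term reproduces $\frac{1}{p'}\frac{d}{dt}$ of the regularized $L^{p'}_{w'}$-norm, while the Laplacian term, after integration by parts, yields the nonnegative dissipative integrals, the weight contribution $\int h_R(\sigma^2+|\varphi|^2)^{\frac{p'-2}{2}}\Delta(w'^{\,p'})\,dx$ and the cut-off remainder $I_R$, exactly as in \eqref{int_parti_SE}. The decisive sign is again supplied by Lemma~\ref{lapes}, since $\Delta(w'^{\,p'})<0$ makes this term a good (negative) contribution.

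Two features are new relative to the Cauchy case. First, integration by parts now generates boundary integrals on $\{x_n=0\}$, arising both from the Laplacian and from the pressure; each of them carries the factor $\Phi$, which vanishes on the boundary because $\varphi_{|x_n=0}=0$, so they all drop out. Second, and this is the main obstacle, the pressure is no longer constant, so the term $\int_{\R^n_+}\nabla\pi_\varphi\cdot\Phi\,dx$ must be controlled. Integrating by parts and using $\nabla\cdot\varphi=0$, it equals $-\int_{\R^n_+}\pi_\varphi\,\varphi\cdot\nabla g\,dx$ with $g:=h_R(\sigma^2+|\varphi|^2)^{\frac{p'-2}{2}}w'^{\,p'}$.

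I would split $\nabla g$ into three pieces and treat the pressure term accordingly: the $\nabla h_R$ piece is $O(1/R)$ and disappears with the cut-off, together with $I_R$; the piece carrying $\nabla\varphi$ is absorbed into the dissipative integrals by Young's inequality; the piece carrying $\nabla(w'^{\,p'})$ is estimated through weighted pressure bounds. For this last piece the tool is the boundedness of the Helmholtz projection and of the associated singular integrals on $L^{p'}_{w'}(\R^n_+)$, which is available because $w'^{\,p'}$ is an $A_{p'}$ weight (Theorem~\ref{HWD} and \eqref{WCZ}); this lets one control $\pi_\varphi$ by $\varphi$ in the relevant weighted norm. This is precisely the step where the sharp constant $1$ of the Cauchy estimate degrades into the general constant $c$ of \eqref{DPRSHS}, and I expect the genuinely delicate point to be doing the absorption \emph{uniformly in time}, i.e. without producing a growing factor such as $e^{ct}$.

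Letting $\sigma\to0$ and then $R\to\infty$ by dominated convergence, as in Lemma~\ref{SDS}, and exploiting the negative sign of the $\Delta(w'^{\,p'})$ term together with the absorbed pressure contribution, I would arrive at $\dm\varphi(t)\dm_{w',p'}\le c\,\dm\varphi_0\dm_{w',p'}$ for all $t\ge0$. Finally, \eqref{FTSHP} follows as in Lemma~\ref{SDS}: testing the equations against smooth divergence-free fields yields the weak continuity $\varphi(t)\rightharpoonup\varphi_0$ as $t\to0^+$, which, combined with the uniform bound \eqref{DPRSHS}, upgrades to strong convergence; alternatively, since $\varphi_0\in\mathscr{C}_0(\R^n_+)$, one may argue directly from \eqref{IRv}, using that $\mathcal{G}(\cdot,\cdot,t)$ tends to the identity as $t\to0^+$ and invoking dominated convergence in $L^{p'}_{w'}(\R^n_+)$.
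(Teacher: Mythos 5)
Your plan is genuinely different from the paper's, and it breaks exactly where you yourself flag the difficulty: at the pressure. The paper never runs the testing argument on the half-space. Instead it starts from Solonnikov's representation \eqref{IRv}, rewrites the Green-function formula by means of two extensions of the datum (the odd extension $\varphi_0^*$ to $\R^n$ and the reflected extension $\overline{\varphi}_0$ supported in $\{x_n<0\}$) as $\varphi(t,x)=\varphi^*(t,x)+S\,\overline{\varphi}(t,x)$, where $\varphi^*$ and $\overline{\varphi}$ solve the \emph{whole-space} Cauchy problem \eqref{Cproblem} --- in which the pressure is a constant and Lemma \ref{SDS} applies --- and $S$ is a Calder\'on--Zygmund operator. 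Then \eqref{DPRSHS} follows from the whole-space bound \eqref{stima_per_convergenza} applied to $\varphi^*$ and $\overline{\varphi}$, the weighted Calder\'on--Zygmund estimate \eqref{WCZ} for $S$, and the continuity of the two extension operators; this composition is precisely where the constant $c$ comes from. In your scheme, by contrast, the pressure term cannot be closed. First, after Young's inequality against the dissipation, the $\nabla\varphi$-piece of $-\int\pi_\varphi\,\varphi\cdot\nabla g\,dx$ leaves behind a term of the form $C_\epsilon\int h_R|\pi_\varphi|^2(\sigma^2+|\varphi|^2)^{\frac{p'-2}{2}}w'^{\,p'}dx$, which is controlled neither by the dissipative integrals nor by $\dm\varphi\dm_{w',p'}^{p'}$; at best it yields a Gronwall-type inequality, i.e.\ exactly the factor $e^{ct}$ that you concede would destroy \eqref{DPRSHS}. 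Second, the boundedness of the weighted Helmholtz projection (Theorem \ref{HWD}, \eqref{WCZ}) does not let you ``control $\pi_\varphi$ by $\varphi$'': projecting the momentum equation onto gradient fields gives $\nabla\pi_\varphi=(I-P)\Delta\varphi$, with $P$ the projection onto the solenoidal subspace, so any bound on the pressure costs two spatial derivatives of $\varphi$ --- equivalently, by semigroup smoothing, a factor $t^{-1}$, cf.\ \eqref{pressure}, which is moreover an unweighted estimate. No inequality of the form $\dm\pi_\varphi(t)\dm\leq c\,\dm\varphi(t)\dm_{w',p'}$ is available; this is the classical obstruction to $L^p$-energy methods for the Stokes semigroup with no-slip boundary conditions, and it is the very reason the paper abandons the testing argument in the half-space and passes to the representation formula.

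There is a second gap at the end. Since the constant in \eqref{DPRSHS} satisfies $c>1$, weak continuity at $t=0^+$ together with the uniform bound does \emph{not} upgrade to strong convergence: that argument needs the sharp constant $1$ (plus uniform convexity), as in Lemma \ref{SDS}, and is unavailable here. The paper instead differentiates the decomposition in time, $\varphi_t=\varphi^*_t+S\,\overline{\varphi}_t$, and applies \eqref{stima_per_convergenza} and \eqref{WCZ} once more to obtain the uniform bound $\dm\varphi_t(t)\dm_{w',p'}\leq c\,\dm\Delta\varphi_0\dm_{w',p'}$ (legitimate because $\varphi_0\in\mathscr{C}_0(\R^n_+)$), whence $\dm\varphi(t)-\varphi(s)\dm_{w',p'}\leq c\,|t-s|$ and \eqref{FTSHP} follows from Lipschitz continuity in time. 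Your fallback --- ``$\mathcal{G}(\cdot,\cdot,t)$ tends to the identity plus dominated convergence'' --- would still require an integrable dominating function for the singular-integral part of the Green function, which is not obvious; the differentiated decomposition is the clean route.
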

\bp
Firstly, we consider $\varphi_0 \in \mathscr{C}_0(\R^n_+)$. Let $(\varphi(t,x),\pi_{\varphi}(t,x))$ be the smooth solution to problem \eqref{problem} corresponding to $\varphi_0$. As we recalled above, $\varphi(t,x)$ admits the representation \eqref{IRv} that can be rewritten as

\begin{align}\label{IRSS}
\varphi(t,x) =\; & \int_{\R^n} \left[ H(t,x-y) - H(t,x-y^*) \right] \vp_0(y)\,dy \notag \\
& + 4 \sum\displaylimits_{i=1}^{n-1} \frac{\partial}{\partial x_i} 
\int_0^{x_n} \int_{\R^{n-1}} \nabla \mathcal{E}(x - y)
\left( \int_{\R^n_+} H(t, y - z^*) \vp_0(z)\,dz \right) dy.
\end{align}

Now, we consider the following extensions for the initial datum $\vp_0$:
\begin{enumerate}
\item[i.] $\vp_0^*$, the odd extension to the whole space $\R^n$;
\item[ii.] $\overline{\vp}_0$, the extension defined by
\begin{equation*}
\overline{\vp}_0(x)= 
\begin{cases}
0 \quad &\mbox{if }x_n>0 \\
\vp_0(x^*)\quad &\mbox{if } x_n<0\,.
\end{cases}
\end{equation*}
\end{enumerate}
We can therefore rewrite \eqref{IRSS} as
\begin{align*}
    \vp(t,x)=&\int_{\R^n}H(t,x-y)\vp_0^*(y)\,dy\,+ \\
&4 \sum_{i=1}^{n-1}\frac{\partial}{\partial x_i}\int_0^{x_n}\int_{\R^{n-1}} \nabla \mathcal{E}(x-y)\Big(\int_{\R^n} H(t,y-z)\overline{\vp}_0(z)\,dz\Big)\,dy \,.
\end{align*}
To read it more clearly, we set
$$ \vp^*(t,x) := \int_{\R^n}H(t,x-y)\vp_0^*(y)\,dy, $$ 
$$\overline{\vp}(t,y):=\int_{\R^n} H(t,y-z)\overline{\vp}_0(z)\,dz\,, $$
and
$$S\,\overline{\vp}(t,x):=4 \sum_{i=1}^{n-1}\frac{\partial}{\partial x_i}\int_0^{x_n}\int_{\R^{n-1}} \nabla \mathcal{E}(x-y)\overline{\vp}(t,y)\,dy\,.$$
We explicitly note that $\vp^*(t,x)$ and $\overline{\vp}(t,y)$ are the solutions of the Stokes Cauchy problem in the $\R^n$ correspondents to initial data $\vp_0^*(x)$ and $\overline{\vp}_0(x)$, respectively, and $S$ is a Calder\'on-Zygmund singular operator. Therefore, we have 
\begin{equation}\label{dec}
\vp(t,x)=\vp^*(t,x) + S\,\overline{\vp}(t,x).
\end{equation}
So, by \eqref{stima_per_convergenza}, we get
\begin{equation} \label{ineq1}
 \dm \vp^*(t)\dm_{L^{p'}_{w'}(\R^n)}\leq \dm \vp^*_0\dm_{L^{p'}_{w'}(\R^n)}.  
\end{equation}
Moreover, by \eqref{WCZ} and \eqref{stima_per_convergenza}, we obtain
\begin{equation} \label{ineq2}
 \dm S\,\overline{\vp}(t)\dm_{L^{p'}_{w'}(\R^n)}\leq c\dm \overline{\vp}(t)\dm_{L^{p'}_{w'}(\R^n)} \leq  c\dm \overline{\vp}_0\dm_{L^{p'}_{w'}(\R^n)}.  
\end{equation}
By \eqref{dec}, using \eqref{ineq1}, \eqref{ineq2} and the continuity of extensions i. and ii.\footnote{see \cite{Chua}.}, we get
\begin{align} \label{SE1}
\dm \vp(t)\dm_{L^{p'}_{w'}(\R^n_+)}\leq \dm \vp^*_0\dm_{L^{p'}_{w'}(\R^n)} + c\,\dm \overline{\vp}_0\dm_{L^{p'}_{w'}(\R^n)}\leq c\, \dm \vp_0\dm_{L^{p'}_{w'}(\R^n_+)} , 
\end{align} 
Now, we prove \eqref{FTSHP}. At first, testing by $\vp$ we get weak continuity. Moreover, differentiating  \eqref{dec} with respect to time, we can observe that
$$\vp_t = \vp_t^* + (S\,\overline{\vp})_t = \vp_t^* + (S\,\overline{\vp}_t).$$
\par By virtue of the fact that $\vp_0 \in \mathscr{C}_0(\R^n_+)$, the following computations are allowed.\\
Using \eqref{stima_per_convergenza}, we get
$$\dm\vp_t^*(t)\dm_{L^{p'}_{w'}(\R^n)} \leq c\, \dm \Delta \vp^*_0\dm_{L^{p'}_{w'}(\R^n_+)}, $$
and, using \eqref{WCZ} together with  \eqref{stima_per_convergenza} again
$$\dm S\, \ov{\vp}_t (t) \dm_{L^{p'}_{w'}(\R^n)} \leq c \dm\ov{\vp}_t(t)\dm_{L^{p'}_{w'}(\R^n)} \leq c \dm \Delta \ov{\vp}_0\dm_{L^{p'}_{w'}(\R^n_+)}\,. $$
Therefore,
\begin{equation}\label{BRL}
    \dm \vp_\tau (\tau) \dm_{L^{p'}_{w'}(\R^n_+)} \, \leq c = c\,(\dm \Delta \vp_0 \dm_{L^{p'}_{w'}(\R^n_+) }).
\end{equation}
Hence, due to \eqref{BRL}, we deduce
$$\dm \vp(t)- \vp (s)\dm_{L^{p'}_{w'}(\R^n_+)} = \left|\left| \int_s^t  \vp_\tau (\tau)\, d \tau\right|\right|_{L^{p'}_{w'}(\R^n_+)} \leq c  \int_s^t \dm \vp_\tau (\tau) \dm_{L^{p'}_{w'}(\R^n_+)} \, d \tau \leq c \left| t-s\right|, $$
so $\vp \in C([0,T); L^{p'}_w(\R^n_+))$, and, in particular,
$$\lim_{t \to 0^+} \dm \vp(t)- \vp_0\dm_{L^{p'}_{w'}(\R^n_+)} =0\,,$$
which concludes the proof. 
\ep

\begin{proof}[Proof of the Theorem \ref{Theoremprincipale}]{[\textit{Existence}]}
Initially, as before, we consider $\varphi_0 \in \mathscr{C}_0(\R^n_+)$. Let $(\varphi(t,x),\pi_{\varphi}(t,x))$ be the smooth solution to the problem \eqref{problem} corresponding to $\varphi_0$. As in previous Lemma, we rewrite $\varphi(t,x)$ as
\begin{equation}\label{decTHM}
\vp(t,x)=\vp^*(t,x) + S\,\overline{\vp}(t,x).
\end{equation}
By the same arguments, involving estimate \eqref{DCWS}, we deduce that
\be \label{SE2}
 \dm \vp(t)\dm_{L^{p}_{w}(\R^n_+)}\leq c \dm \vp_0\dm_{L^{p}_{w}(\R^n_+)}\,,
\ee
for $p>n$ and $t>0$.
Now, we estimate the solution and its successive derivatives in the $L^q(\R^n_+)$-norm for all $t>0$, analogously to the approach adopted above. Indeed, let us  consider $q \in(n,\infty)$. Recalling the estimate \eqref{gradienti_eq_stokes_beta_q} with $l=k=0$, we get
\begin{equation}\label{inq1}
    \dm \vp^*(t)\dm_{L^q(\R^n)} \leq c\, t^{-\frac{n}{2}(\frac{1}{n}-\frac{1}{q})}\dm \vp_0^*\dm_{L^p_w(\R^n)}\,.
\end{equation}
Moreover, employing the classical Calder\'on-Zygmund Theorem we also have
\begin{equation}\label{inq2}
    \dm S\,\overline{\vp}(t)\dm_{L^q(\R^n)} 
    \leq c \dm \overline{\vp}(t)\dm_{L^q(\R^n)} 
    \leq c\, t^{-\frac{n}{2}(\frac{1}{n}-\frac{1}{q})}\dm \overline{\vp}_0\dm_{L^p_w(\R^n)}.
\end{equation}
Therefore, from \eqref{dec} and by the continuity of extensions i. and ii., there holds
\begin{equation}\label{inq}
    \dm \vp(t)\dm_{L^q(\R^n_+)} 
    \leq c\, t^{-\frac{n}{2}(\frac{1}{n}-\frac{1}{q})}\dm \vp_0\dm_{L^p_w(\R^n_+)}.
\end{equation}
For $L^\infty(\R^n_+)$ estimate, fix $t>0$. Setting $s=\frac{t}{2}$, we consider the following problem
\begin{equation}\label{Hproblem}
\begin{array}{ll}
\psi_\tau + \nabla \pi_{\psi}= \Delta \psi \,,\;&\text{in}\,\, (0,\infty) \times \mathbb{R}^n_{+}\,, \\
\nabla \cdot \psi=0\,, &\text{in}\,\, (0,\infty) \times \mathbb{R}^n_{+}, \\
\psi_{|x_n=0}\,=0 \,,\\
\psi(0,x)=\vp(s,x)\,. 
\end{array}
\end{equation}
From \eqref{inq}, we know that $\vp(s,x) \in L^q(\R^n_+)$, for  all $q\in (n,\infty)$, Concerning \eqref{Hproblem}, we apply the well known classical theory for the Stokes system. So, there exists a unique smooth solution $(\psi(\tau,x),\pi_{\psi}(\tau,x))$ to problem \eqref{Hproblem} such that the following estimate holds
$$
\dm \psi(\tau)\dm_{L^\infty(\R^n_+)}\leq  c\, \tau^{-\frac{n}{2q}}\dm\psi(0)\dm_{L^q(\R^n_+)},$$
for all $\tau \geq 0$. So, by uniqueness $\psi(\tau,x)= \vp(\tau + s,x)$, 
\begin{align*}
\dm \vp(\tau)\dm_{L^\infty(\R^n_+)}\leq  c (\tau-s)^{-\frac{n}{2q}}s^{-\frac{n}{2}(\frac{1}{n}-\frac{1}{q})}\dm\vp_0\dm_{L^p_w(\R^n_+)}\,.
\end{align*}
Choosing $\tau=t$ and recalling that $s=\frac{t}{2}$, we have
\be \label{LinftyE}
\dm \vp(t)\dm_{L^\infty(\R^n_+)}\leq  c\, t^{-\frac{1}{2}}\dm\vp_0\dm_{L^p_w(\R^n_+)}\,.
\ee
As $t$ is arbitrary,  estimate \eqref{LinftyE} holds for all $t>0$. This completes the $L^q$-estimate of $\vp(t,x)$ for all $q\in(n,\infty]$. \\
Finally, from classical theory, for all $l, k \in \mathbb{N}_0$, the following estimate also holds
\begin{equation*}
\dm D^l_t \nabla^k_x
\vp(t)\dm_{L^q(\R^n_+)}\leq c\, (t-\sigma)^{-\frac{k}{2}-l}\dm\vp(\sigma)\dm_{L^q(\R^n_+)}\,.
\end{equation*}
Thus, choosing $\sigma=\frac{t}{2}$, by \eqref{inq} and \eqref{LinftyE} we arrive at 
\begin{equation} \label{dersuc}
\dm D_t^l \nabla^k \vp(t)\dm_{L^q(\R^n_+)}\leq c\, t^{-\frac{n}{2}(\frac{1}{n}-\frac{1}{q})-\frac{k}{2}-l}\dm\vp_0\dm_{L^p_w(\R^n_+)}\,,
\end{equation}
for all $q\in (n,\infty]$, and $t>0$.\par
Due to \eqref{dersuc} for $(l,k) = (0,2)$ and $(l,k) = (1,0)$, we can estimate the pressure gradient using $\eqref{problem}_1$. So, we obtain
\begin{equation}\label{pres}
 \dm  \nabla \pi_\vp(t)\dm_{L^q(\R^n_+)}\leq c\, t^{-\frac{n}{2}(\frac{1}{n}-\frac{1}{q})-1}\dm\vp_0\dm_{L^p_w(\R^n_+)}\,,  
\end{equation}
for all $t> 0$, and $q>n$.
To prove that $\vp(t,x)$ attains the initial datum $\vp_0$ in $L^p_w(\R^n_+)$-norm, it suffices to argue as in the previous Lemma. \par
Now, we consider $v_0\in J_{w}^p(\R^n_+)$. There exists a sequence $\{v_0^m\}\subset \mathscr{C}_0(\R^n_+)$ such that $v_0^m \to v_0$ in $J_{w}^p(\R^n_+)$ and we denote by $\{v^m(t,x)\}$ the sequence of smooth solutions to the Cauchy problem \rf{Cproblem}.
By virtue of the linearity of the problem, \eqref{SE2},  and \eqref{dersuc},  we get:
\begin{gather*}
\dm v^m(t)-v^{\nu}(t)\dm _{w, p}\leq c \dm v_0^m-v_0^{\nu}\dm _{w,p}\\
\dm D_t^l\nabla^k v^m(t)-D_t^l\nabla^k v^{\nu}(t)\dm _{q}\leq ct^{-\frac{k}{2}-l-\frac{n}{2}({\frac{1}{n}}-\frac{1}{q})}\dm v^m_0-v^{\nu}_0\dm _{w, p}
\end{gather*}
which ensure the existence of a limit $v(t,x)$ uniformly in $t>0$ and $v$ enjoys \rf{regular_Hs}. Moreover, for the limit $v(t,x)$, \rf{SE2} and \rf{dersuc} trivially hold.

Let us resume the convergence property:
$$v_0^m \to v_0 \text{ in } J_{w}^p(\R^n_+),$$
$$\lim_{t \to 0^+} \dm v(t)- v_0\dm_{L^p_w(\R^n_+)} =0, $$
and 
$$ \dm v^{m_1}(t)- v^{m_2}(t)\dm_{L^p_w(\R^n_+)} \leq c \dm v^{m_1}_0- v^{m_2}_0\dm_{L^p_w(\R^n_+)}, \text{ for all } t>0, $$
so we deduce the uniform in time convergence of $v^m(t)$ to $v(t)$.
Let $\varepsilon>0$ be fixed, there exists $M(\varepsilon) \in \N $ such that
$\dm v_0^{M(\varepsilon)} - v_0\dm_{L^p_w(\R^n_+)} <\varepsilon$ and $\dm v^{M(\varepsilon)}(t) - v(t)\dm_{L^p_w(\R^n_+)} <\varepsilon$, for all $t>0$. Therefore, 
\begin{align*}
    \dm v(t) - v_0\dm_{L^p_w(\R^n_+)} \leq & \dm v(t) - v^{M(\varepsilon)}(t)\dm_{L^p_w(\R^n_+)} + \dm  v^{M(\varepsilon)}(t) - v_0^{M(\varepsilon)}\dm_{L^p_w(\R^n_+)} + \dm v_0^{M(\varepsilon)} - v_0\dm_{L^p_w(\R^n_+)} \leq \\
    & 2 \varepsilon \; + \; \dm  v^{M(\varepsilon)}(t) - v_0^{M(\varepsilon)}\dm_{L^p_w(\R^n_+)},
\end{align*}
so passing to the limit as $t\to 0^+$, by the arbitrariness of $\varepsilon$, we get \eqref{LP}.\\
Finally, by \rf{regular_Hs}, together with \eqref{SE2}, \eqref{dersuc}, and \eqref{pres} one easily proves \eqref{RP} and \eqref{pressure} for all $t>s\geq0$. 
\par[{\it Uniqueness}] As done before, we use a duality argument to state   the uniqueness. Let  $(u(t,x), \pi_u(t,x))$ and $(v (t,x), \pi_v(t,x))$ be two solutions to the IBVP of the Stokes equations \eqref{problem} with initial datum $v_0$. We set $w(t,x): = u (t,x)-v (t,x)$ and $\pi_w(t, x):=\pi_u(t, x)-\pi_v(t, x)$. So, $w$ is a solution to following problem:
\begin{equation}\label{eq_w}
\begin{array}{ll}
w_t + \nabla \pi_w= \Delta w \,,\;&\text{in}\,\, (0,\infty) \times \mathbb{R}^n_{+}\,, \\
\nabla \cdot w=0\,, &\text{in}\,\, (0,\infty) \times \mathbb{R}^n_{+}, \\
w_{|x_n=0}\,=0 \,,\\
w(0,x)=0\,. 
\end{array}
\end{equation}
Let $\varphi_0 \in \mathscr{C}_0(\R^n_+)$ 
and let $(\varphi(t,x),\pi_{\vp})$ a solution to the  problem \eqref{problem} corresponding to $\varphi_0$ enjoying \rf{RHE}, \rf{stima_per_convergenza} and  \rf{FTS}. We define:
\begin{equation*}
\widehat{\varphi}(\tau,x) = \varphi(t-\tau,x) \,,\qquad \text{for} \,(\tau,x) \in (0,t)\times \R^n_+\,.
\end{equation*}
As it is known, $\widehat{\varphi}$ is a solution backward in time on $(0,t)\times\R^n_+$ with $
\widehat{\varphi}(t,x) = {\varphi}_0(x) $ on $  \{t\}\times \R^n_+
$.
Multiplying the first equation in  \eqref{eq_w} for $\widehat{\varphi}$ and integrating by parts on $[s,t] \times \R^n_+$, we obtain:
\begin{equation*}
(w(t),\varphi_0) = (w(s),\varphi(t-s))\,,
\end{equation*}
from which 
\begin{equation*}
|(w(t),\varphi_0)| = |(w(s),\varphi(t-s))| \leq \dm w(s)\dm_{L^p_w(\R^n_+)}\dm \varphi(t-s)\dm_{L^{p'}_{w'}(\R^n_+)}\,.
\end{equation*}
Using \eqref{stima_per_convergenza},
we get 
\begin{equation*}
|(w(t),\varphi_0)| \leq \dm w(s)\dm_{L^p_w(\R^n_+)} \dm \varphi_0\dm_{L^{p'}_{w'}(\R^n_+)}\,.
\end{equation*}
The second term tends to $0$ as $s \to 0$, so $$(w(t),\varphi_0) = 0\, .$$
Due to the arbitrariness of $\varphi_0$ and $t$, the last relation implies that $w = 0 $ on $\{t\} \times \R^n_+$ for all  $t>0$.
This completes the proof.
\end{proof}

\section{Appendix} 
The arguments presented in this Appendix follow the approach developed by Solonnikov in \cite{sol} (see also \cite{Maremonti}).\par
We define the following spaces.
\begin{gather*}
J^q_{w}(\mathbb{R}_+^n)=\{v\in L^q_{w}(\mathbb{R}_+^n)\, |\, (v,\nabla h)=0 \,\, \text{for all}\, h \in W^{1,q'}_{loc}(\mathbb{R}_+^n,w')\text{,}\, \nabla h \in L^{q'}_{w'}(\R_+^n)\}\,,\\
G^q_{w}(\mathbb{R}_+^n)=\{u\in L^q_{w}(\mathbb{R}_+^n)\, |\, \exists \pi\, : \,\, u=\nabla \pi \text{,}\,\text{with} \, \pi \in W^{1,q}_{loc}(\mathbb{R}_+^n,w)\text{,}\, \nabla \pi \in L^{q}_{w}(\R_+^n)\}\,,
\end{gather*}
where $w$ and $w'$ are the weight function and its dual, as defined in \eqref{wf} and \eqref{dwf}, respectively.
We need of the following:
\begin{lemma} \label{lemma}{\sl
Let $q \in (1,+\infty)$. Then, there holds
\begin{equation}
J^q_{w}(\mathbb{R}_+^n)\cap G^q_{w}(\mathbb{R}_+^n)=\{0\}\,.
\end{equation}}
\end{lemma}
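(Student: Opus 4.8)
The plan is to fix an arbitrary $u\in J^q_{w}(\R^n_+)\cap G^q_{w}(\R^n_+)$ and prove $u=0$. Since $u\in G^q_{w}(\R^n_+)$, write $u=\n\pi$ with $\pi\in W^{1,q}_{loc}(\R^n_+,w)$ and $\n\pi\in L^q_w(\R^n_+)$. The membership $u\in J^q_{w}(\R^n_+)$ gives the orthogonality relation $(\n\pi,\n h)=0$ for every $h$ with $\n h\in L^{q'}_{w'}(\R^n_+)$; in particular this holds for all $h\in C^\infty_0(\overline{\R^n_+})$ whose gradient is supported away from the points $\ov x_j$, since such test functions satisfy the integrability requirement. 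The strategy is to read this weak identity first against interior and then against boundary–nonvanishing test functions, so as to turn it into a homogeneous Neumann problem for $\pi$, and to close the argument by a weighted Liouville theorem.

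First I would extract the boundary value problem. Testing with $h\in C^\infty_0(\R^n_+\setminus\{\ov x_1,\dots,\ov x_m\})$ yields $\Delta\pi=0$ in the distributional sense away from the $\ov x_j$; by Weyl's lemma $\pi$ is harmonic, hence smooth, there. Next, testing with $h\in C^\infty_0(\overline{\R^n_+})$ and integrating by parts, the interior term drops out because $\Delta\pi=0$, leaving $\int_{\{x_n=0\}}(\pa_n\pi)\,h\,dS=0$; by the arbitrariness of the boundary values of $h$ this gives the homogeneous Neumann condition $\pa_n\pi=0$ on $\{x_n=0\}$. The remaining point is that the finitely many isolated singularities at the $\ov x_j$ are removable: any multipole singular part $c_\beta\,\pa^\beta\mathcal E(\cdot-\ov x_j)$ is ruled out either by the local integrability $\n\pi\in L^q_w$ or, when the weight permits it, by choosing admissible $h$ that prescribe the values and derivatives of $h$ at $\ov x_j$ in the orthogonality relation, forcing $c_\beta=0$. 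Thus $\pi$ is in fact harmonic throughout $\R^n_+$ and smooth up to $\{x_n=0\}$ by boundary elliptic regularity.

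The core step is then a Liouville argument. Because $\pa_n\pi=0$ on $\{x_n=0\}$, the even reflection $\widetilde\pi(x',x_n):=\pi(x',|x_n|)$ is harmonic on all of $\R^n$ (Schwarz reflection for the Neumann condition), so each component $g$ of $\n\widetilde\pi$ is entire harmonic and lies in $L^q_{\widetilde w}(\R^n)$, where $\widetilde w$ is the even reflection of $w$ and satisfies $\widetilde w(x)\simeq|x|^{\alpha}$ as $|x|\to\infty$, with $\alpha=1-\frac np>0$. Using that $|g|^q$ is subharmonic, for $x_0\in\R^n$ far from the singular set and $r=\tfrac12|x_0|$ I would estimate
\[
|g(x_0)|^q\le \frac{c}{r^n}\int_{B_r(x_0)}|g|^q\,dy\le \frac{c}{r^n\,\inf_{B_r(x_0)}\widetilde w^{\,q}}\,\dm g\dm_{L^q_{\widetilde w}(\R^n)}^q\le c\,|x_0|^{-n-\alpha q}\,\dm g\dm_{L^q_{\widetilde w}(\R^n)}^q,
\]
which tends to $0$ as $|x_0|\to\infty$. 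Hence the entire harmonic function $g$ vanishes at infinity and is therefore identically $0$, so $\n\widetilde\pi\equiv0$ and $u=\n\pi=0$.

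The main obstacle is the passage through the interior singular points $\ov x_j$ and through the boundary: one must justify that the merely weak Neumann condition coming from the $J^q_w$–orthogonality upgrades to enough regularity for the even reflection to be genuinely harmonic across $\{x_n=0\}$, and that the possible isolated singularities at $\ov x_j$ (and their mirror images) are removable — the latter being precisely where the solenoidal constraint, rather than integrability alone, must be invoked. Once these two points are handled, the weight's lower bound $\widetilde w\gtrsim|x|^\alpha$ with $\alpha>0$ makes the subharmonic decay estimate close the argument uniformly for every $q\in(1,\infty)$.
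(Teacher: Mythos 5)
Your proof takes a genuinely different route from the paper's: the paper never classifies singularities of $\pi$, but instead tests $f=\n\pi$ against $\n\psi$, where $\psi$ solves the Neumann problem $\Delta\psi=\n\cdot g$, $\pa_\nu\psi=g\cdot\nu$ for arbitrary $g\in C^\infty_0(\R^n_+)$, and removes the cut-off via the decay $|\n\psi|\lesssim|x|^{-n}$, concluding $(\n\pi,g)=0$ for every $g$. Your alternative (Weyl's lemma, even reflection, Liouville via the subharmonic mean value inequality) is sound in its reflection and Liouville parts, and it does close for large exponents --- in particular for $q=p>n$, which is all the paper actually uses later --- because then no multipole singularity of a harmonic function has gradient in $L^q_w$ near $\ov{x}_j$ (already the monopole fails, since $p(n-1-\alpha_j)\geq p(n-2)+n\geq n$). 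The genuine gap is exactly at the step you flag as the main obstacle: removability of the singularities at the $\ov{x}_j$ for small $q$, where neither of your two mechanisms works.

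Concretely, let $\alpha_j>0$ and $1<q<\frac{n}{n-\alpha_j}$, i.e. $\alpha_jq'>n$. A dipole part $\pi_s=d\cdot\n\mathcal E(\cdot-\ov{x}_j)$ then has $\n\pi_s\in L^q_w$ near $\ov{x}_j$, since
\begin{equation*}
\int_0^\delta r^{-nq}\,r^{\alpha_j q}\,r^{n-1}\,dr<\infty \iff q(n-\alpha_j)<n\,,
\end{equation*}
so integrability does not exclude it. At the same time, every admissible $h$ (i.e. $\n h\in L^{q'}_{w'}$) satisfies, by Morrey's inequality on dyadic annuli around $\ov{x}_j$ (note $q'>n/\alpha_j>n$), $h(x)=h(\ov{x}_j)+o(|x-\ov{x}_j|)$; since moreover the flux of $\n(d\cdot\n\mathcal E)$ across small spheres vanishes, a direct cut-off computation gives $(\n\pi_s,\n h)=0$ for \emph{every} admissible $h$. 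So the test functions you would need --- admissible $h$ behaving linearly with nonzero gradient at $\ov{x}_j$ --- simply do not exist in this regime, and the dipole coefficients cannot be forced to vanish. This is not a fixable technicality: taking $m\geq2$ distinct points with $0<\alpha_1<\alpha$, choosing $q\in\big(\frac{n}{n-\alpha},\frac{n}{n-\alpha_1}\big)$ and $d$ tangential, the field $u=\n\big(d\cdot\n\mathcal E(\cdot-\ov{x}_1)+d\cdot\n\mathcal E(\cdot-\ov{x}_1^{\,*})\big)$, with $\ov{x}_1^{\,*}$ the mirror point across $\{x_n=0\}$ (so that $\pa_n\pi=0$ on the boundary), belongs to $L^q_w(\R^n_+)$ globally and, by the above, to $J^q_w(\R^n_+)\cap G^q_w(\R^n_+)\setminus\{0\}$; hence local removability, and with it your scheme, genuinely fails in that range (this regime is delicate for the paper's own argument too, since $w'^{\,q'}$ is then not even locally integrable at $\ov{x}_1$, but that concerns the paper, not your proof). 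A minor additional point: you derive the pointwise Neumann condition $\pa_n\pi=0$ before having any regularity of $\pi$ up to the boundary, which is circular; the clean fix is to perform the even reflection directly at the level of the weak identity $(\n\pi,\n h)=0$, using test functions of the form $h(x)+h(x^*)$.
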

\begin{proof}
Assume that exists $f \in J^q_{w}(\mathbb{R}_+^n)\cap G^q_{w}(\mathbb{R}_+^n) $ such that $f\neq 0$. Then, by definition:
\begin{gather*}
(f,\nabla h)=0 \quad \text{for all}\, h \in W^{1,q'}_{loc}(\mathbb{R}_+^n,w)\text{,}\, \nabla h \in L^{q'}_{w}(\R_+^n)\,, \\
f=\nabla \pi \quad \text{with} \, \pi \in W^{1,q}_{loc}(\mathbb{R}_+^n,w )\text{,}\, \nabla \pi \in L^{q}_{w}(\R_+^n)\,.
\end{gather*}
Let $h_R \in (0,1)$ be a smooth cut-off function with:
\begin{equation*}
h_R =
\begin{cases}
1 \quad \text{if}\,\, |x|\leq R\,, \\
0 \quad \text{if}\,\, |x|\geq 2R\,,
\end{cases}
\end{equation*}
and $\nabla h_R=O(\frac{1}{R})$.\par
We denote by $\psi$ the solution for the problem:
\begin{equation} \label{NP}
\begin{array}{ll}
     & \Delta \psi= \nabla \cdot g \,,\quad \mbox{in }\R_+^n\,,\\
     & \frac{d\psi}{d\nu}=g\cdot \nu\,,  \quad \,\,\,\mbox{on }x_n=0\,,
\end{array}
\end{equation}
with $g\in C^{\infty}_0(\R_+^n)$ and $\nu$ the unit normal vector to the surface $x_n=0$. We know that
\begin{equation}
\psi = \int_{\R_+^n} \nabla_y[(\mathcal{E}(x-y)+\mathcal{E}(x-y*)]g(y)\,dy\,,
\end{equation}
and
\begin{equation}\label{86}
\nabla \psi= \int^{*}_{\R^n} \nabla \nabla [(\mathcal{E}(x-y)+\mathcal{E}(x-y*)]g(y)\,dy\,,
\end{equation}
where $\mathcal{E}$ is the fundamental solution of the Laplace equation.\par
We can employ Calder\'on-Zygmund Theorem to obtain the following estimate:
\begin{equation}
\dm \nabla \psi\dm _{L^{q'}(\R^n_+)}\leq C\dm g\dm _{L^{q'}(\R^n_+)}\,, \quad \text{for all}\, q' \in (1,+\infty).
\end{equation}
Moreover, by \eqref{WCZ}, we get
\begin{equation}\label{stein}
\dm \nabla \psi \dm _{L^{q'}_{w'}(\R^n_+)}\leq c \dm g \dm _{L^{q'}_{w'}(\R^n_+)}\,.
\end{equation}
Let $d=\displaystyle\max_{j=1,\dots,m} |\ov{x}_j|$, and consider $B^+_{2R}(O)$ the upper half-ball centered at the origin, i.e. $ B_{2R}(O)\cap \{x\,:\, x_n\geq0\}$  where $R> \displaystyle\max\{ diam(supp\,g), d\}$. Since $g\in C^{\infty}_0(\R^n_+)$, it follows that $\nu\cdot \nabla \psi=0$ on $x_n=0$. Therefore, we have
\begin{align*}
0&=(f,\nabla\psi)=(\nabla \pi,\nabla \psi)=(h_R \nabla \pi, \nabla \psi) + ( (1-h_R)\nabla \pi,\nabla \psi)\\
 &= -((\pi-\overline{\pi}_R)h_R,\nabla \cdot g) - ((\pi-\overline{\pi}_R)\nabla h_R,\nabla \psi)+( (1-h_R)\nabla \pi,\nabla \psi)\\
&= (\nabla\pi,g)-((\pi-\overline{\pi}_R)\nabla h_R,\nabla \psi)+  ((1-h_R)\nabla \pi,\nabla \psi)=: I_1 + I_2 + I_3\,,
\end{align*}
where $\overline{\pi}_R=\frac{1}{|B^+_{2R}|}\int_{B^+_{2R}}\pi(y) \, dy$.\\
Trivially, $I_3$ tends to zero as $R\to +\infty$.\par
Now, we estimate $I_2$. We set $K_R=B^+_{2R}\setminus B^+_{R}$.\\
Since $g$ has compact support, due to \eqref{86}, we get $|\nabla \psi |\leq \frac{B}{|x|^n}$ for all $x\in \R^n_+$ such that $|x|>2\,diam(supp\,g)$. So, applying H\"older's inequality,  we get:
\begin{align*}
|I_2|&\leq \sfrac{1}{R}\dm \nabla \psi\dm _{L^{\infty}(K_R)}\int_{B^+_{2R}}|\pi - \overline{\pi}_R|\,dx\leq \sfrac{C}{R}R\dm \nabla \psi\dm _{L^{\infty}(K_R)} \int_{B^+_{2R}}|\nabla \pi|\,dx  \\
&= C\dm \nabla \psi\dm _{L^{\infty}(K_R)}\int_{B^+_{2R}}|\nabla \pi| w^{-1}\,w\,dx \leq C\dm \nabla \psi\dm _{L^{\infty}(K_R)}\dm \nabla \pi \dm _{w,q} \Big[\int_{B^+_{2R}}\sfrac{1}{w^{\,q'}}\Big]^{\frac{1}{q'}}\\
&\leq C\dm \nabla \pi \dm _{L^q_w(\R^n_+)} R^{-\frac{n}{q}-\alpha}\,,
\end{align*}
which tends to zero as $R\to +\infty$.\\
So, we deduce that:
\begin{equation}
(\nabla \pi,g)=0,
\end{equation}
for all $g\in C^{\infty}_0(\R_+^n)$. Thus $f=0$, and this contradiction implies the thesis.
\end{proof}
We are in position to prove Helmholtz decomposition for $L^q_{w}(\R_+^n)$-spaces
\begin{theorem}\label{dec_helmotz_teorema}{\sl 
Let $q \in (1,+\infty)$. Then, there holds
\begin{equation}
L^q_{w}(\R_+^n)=J^q_{w}(\mathbb{R}_+^n)\oplus G^q_{w}(\mathbb{R}_+^n)\,,
\end{equation}
 that is for all $u \in L^q_{w}(\R_+^n)$, $u=v+\nabla \pi_u$, with the following integral identities:
\begin{gather}
(u,\nabla \pi)=(\nabla \pi_u ,\nabla \pi) \quad \text{for all}\, \, \nabla \pi \in G_{w'}^{q'}(\R_+^n)\, ,  \label{1)} \\
(v,\nabla \pi)=0 \quad \text{for all}\, \, \nabla \pi \in G_{w'}^{q'}(\R_+^n) \label{2)}\, ,
\end{gather}
and
\begin{equation} \label{3}
\dm v\dm _{L^q_w(\R^n_+)} + \dm  \nabla \pi \dm _{L^q_w(\R^n_+)} \leq C \dm u\dm _{L^q_w(\R^n_+)}\,
\end{equation}
with $C$ independent of $u$.}
\end{theorem}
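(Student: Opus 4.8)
The plan is to reduce the decomposition to the solvability of a weighted weak Neumann problem: given $u \in L^q_w(\R^n_+)$, I would produce a function $\pi_u$ with $\nabla \pi_u \in L^q_w(\R^n_+)$ such that $(\nabla \pi_u, \nabla h) = (u, \nabla h)$ for every $\nabla h \in G^{q'}_{w'}(\R^n_+)$, together with the a priori bound $\dm \nabla \pi_u \dm _{L^q_w(\R^n_+)} \le C \dm u \dm _{L^q_w(\R^n_+)}$. Once this is available, setting $v := u - \nabla \pi_u$ gives the splitting $u = v + \nabla \pi_u$: the defining identity is exactly \eqref{1)}, while \eqref{2)} follows immediately from $(v,\nabla h) = (u,\nabla h) - (\nabla \pi_u, \nabla h) = 0$, so that $v \in J^q_w(\R^n_+)$ and $\nabla \pi_u \in G^q_w(\R^n_+)$. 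The directness of the sum, i.e. the uniqueness of the decomposition, is precisely Lemma \ref{lemma}, and \eqref{3} is obtained by combining the bound on $\nabla \pi_u$ with the triangle inequality $\dm v \dm _{L^q_w(\R^n_+)} \le \dm u \dm _{L^q_w(\R^n_+)} + \dm \nabla \pi_u \dm _{L^q_w(\R^n_+)}$.

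To construct $\pi_u$ I would first treat a smooth compactly supported datum $u \in C^\infty_0(\R^n_+)$ and use the explicit half-space Neumann potential built from the reflected fundamental solution \eqref{Erapp}, namely $\pi_u(x) = \int_{\R^n_+} \nabla_y[\mathcal{E}(x-y) + \mathcal{E}(x-y^*)]\cdot u(y)\,dy$ with $y^* = (y_1,\dots,-y_n)$, exactly the potential already used in Lemma \ref{lemma}. Differentiating once more, $\nabla \pi_u$ is represented by a kernel made of the second derivatives $\nabla\nabla \mathcal{E}$ and of its reflected counterpart, hence by a regular Calder\'on--Zygmund singular integral operator. Since $w$ lies in the Muckenhoupt class for the exponent $q$, the weighted estimate \eqref{WCZ} applies and yields $\dm \nabla \pi_u \dm _{L^q_w(\R^n_+)} \le C \dm u \dm _{L^q_w(\R^n_+)}$ with $C$ independent of $u$, in the same spirit as \eqref{stein}. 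I would then pass to an arbitrary $u \in L^q_w(\R^n_+)$ by density of $C^\infty_0(\R^n_+)$ in $L^q_w(\R^n_+)$ and the uniformity of the bound, checking that the identities \eqref{1)}--\eqref{2)} survive the limit thanks to the continuity of the pairing between $L^q_w(\R^n_+)$ and $L^{q'}_{w'}(\R^n_+)$ (recall $w\,w' \equiv 1$).

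The hard part is the weighted a priori estimate and, within it, the correct handling of the boundary. First, one must verify that the kernel coming from $\nabla\nabla[\mathcal{E}(x-y)+\mathcal{E}(x-y^*)]$ genuinely fulfils the size and regularity (cancellation) hypotheses of a regular singular integral operator, so that \eqref{WCZ} applies to the reflected term on the same footing as to the principal one. Second, for a non-smooth datum one must justify both the boundary integration by parts yielding the Neumann condition $\nabla \pi_u \cdot \nu = u \cdot \nu$ on $x_n = 0$ and the vanishing of the boundary and far-field contributions in the weak identities; here I would reuse the cut-off and tail argument of Lemma \ref{lemma}, in which the weighted H\"older inequality forces the remainder over $B^+_{2R}$ to decay like $R^{-\frac{n}{q}-\alpha} \to 0$. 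Once these two points are secured, the remaining verifications are routine.
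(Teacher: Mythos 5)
Your proposal is correct and follows essentially the same route as the paper: the half-space Neumann potential built from $\mathcal{E}(x-y)+\mathcal{E}(x-y^*)$, the weighted Calder\'on--Zygmund bound \eqref{WCZ}, the cut-off and tail argument of Lemma \ref{lemma} to handle the boundary and far-field terms, and a density argument to pass from $C^\infty_0(\R^n_+)$ to general $u\in L^q_w(\R^n_+)$. The only cosmetic difference is that you take \eqref{1)} as the defining weak Neumann identity and derive \eqref{2)}, whereas the paper first shows $v=u-\nabla\psi\in J^q_w(\R^n_+)$ and then deduces \eqref{1)} and \eqref{3}; the content is the same.
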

\begin{proof}
Let us first consider a field $u \in C^{\infty}_0(\R_+^n)$. As we have done before, we denote by $\psi$ the solution of the problem \eqref{NP} and for such solution \eqref{stein} holds. We set:
\begin{equation}\label{setting}
v:= u- \nabla \psi\,.
\end{equation}
We claim that $(v,\nabla \pi)=0$ for all $\pi \in W^{1,q'}_{loc}(\R_+^n,w')$, with $\nabla \pi \in L_{w'}^{q'}(\R_+^n) $. Considering $h_R$, $\overline{\pi}_R$ and $R$ be as in the proof of Lemma \ref{lemma}, with the same arguments as before, we have:
\begin{align*}
(v,\nabla \pi)&=(v,h_R \nabla (\pi - \overline{\pi}_R) + (v,(1-h_R)\nabla \pi)=\\
&=-(\nabla \cdot v \, h_R,(\pi-\overline{\pi}_R)) - (v\cdot \nabla h_R,\pi-\overline{\pi}_R)+(v,(1-h_R)\nabla \pi)\,.
\end{align*}
Definition \eqref{setting} furnishes $\nabla \cdot  v=0$ and, in particular, $v\cdot \nu=0$. So, the first term on the right-hand side is zero. Considering $|(v, \nabla \pi)|$, it is enough to argue as in the case of Lemma \ref{lemma} to find that $(v, \nabla \pi)=0$. So, $v \in J_{w}^{q}(\R_+^n) $ and this implies \eqref{2)}. From \eqref{2)} and \eqref{setting}, we deduce \eqref{1)} and \eqref{3}.\par
Now, we consider $u \in L^{q}_{w}(\R^n_+)$. There exists a sequence $\{u_k\}\subset C^{\infty}_0(\R_+^n)$ converging to $u$ in $L^{q}_{w}$-norm. Thus:
\begin{equation*}
u_k=v_k+\nabla \psi_k\,,
\end{equation*}
with $v_k \in J^{q}_{w}(\R_+^n)$ and $\nabla \psi_k \in G^{q}_{w}(\R_+^n)$. By virtue of \eqref{3}, we get:
\begin{equation*}
\dm v_k-v_m\dm _{w,q}+\dm \nabla \psi_k-\nabla \psi_m\dm _{w,q}\leq C\dm u_k-u_m\dm _{w,q}\,.
\end{equation*}
In particular, $v_k$ converging to a field $v$ and $\nabla \psi_k$ converging to a field $h$ in $L^{q}_{w}$-norm. We claim that $v \in J^{q}_{w}(\R_+^n)$ and $h \in G^{q}_{w}(\R_+^n)$. As the matter of fact, we get:
\begin{equation*}
(v, \nabla p)=\lim_{k \to + \infty}(v_k, \nabla p)=0\,,
\end{equation*}
for all $p \in W^{1,q'}_{loc}(\R_+^n,w')$, with $\nabla p \in  L^{q'}_{w'}(\R_+^n)$, and considering $\varphi \in \mathscr{C}_0(\R_+^n)$, we have:
\begin{equation*}
(h,\varphi)=\lim_{k \to + \infty}(\nabla \psi_k, \varphi)=-\lim_{k \to + \infty}(\psi_k, \nabla\cdot \varphi)=0\,.
\end{equation*}
Thus, there exists  $\pi_u \in W^{1,q}_{loc}(\R_+^n,w)$ such that $h=\nabla \pi_u$. Moreover, applying Minkowski's inequality, we arrive at
\begin{align*}
\dm u-(v+\nabla \pi_u)\dm _{w,q}\leq& \dm u- u_k +u_k-(v+\nabla \pi_u)\dm _{w,q}\leq \\ &\dm u- u_k\dm _{w,q} + \dm v_k-v \dm _{w,q} +
\dm  \nabla \psi_k -\nabla \pi_u\dm _{w,q}\,,
\end{align*}
which tends to zero as $k\to +\infty$.\\
Then, we get
\begin{equation*}
u=v+\nabla \pi_u\,,
\end{equation*}
and
\begin{equation*}
\dm v\dm _{w,q}+ \dm \nabla \pi_u\dm _{q,\beta}=\lim_{k \to + \infty}\Big(\dm v_k\dm _{w,q}+ \dm \nabla \psi_k\dm _{w,q}\Big)\leq C \lim_{k \to + \infty}\dm u_k\dm _{w,q}= C\dm u\dm _{w,q}\,.
\end{equation*}
This completes the proof.
\end{proof}
\begin{lemma}{\sl 
Let $q\in (1,+\infty)$. Then, $J_{w}^q(\R_+^n)$ is the completion of $\mathscr{C}_0(\R_+^n)$ in $L_{w}^q$-norm.}
\end{lemma}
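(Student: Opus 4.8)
The plan is to establish the two set inclusions separately; write $\mathcal{J}$ for the completion of $\mathscr{C}_0(\R^n_+)$ in the $L^q_w(\R^n_+)$-norm, so that the claim is $\mathcal{J}=J^q_w(\R^n_+)$. The inclusion $\mathcal{J}\subseteq J^q_w(\R^n_+)$ is the routine one. Indeed, for any $\phi\in\mathscr{C}_0(\R^n_+)$ and any $h$ with $h\in W^{1,q'}_{loc}(\R^n_+,w')$ and $\nabla h\in L^{q'}_{w'}(\R^n_+)$, an integration by parts gives $(\phi,\nabla h)=-(\nabla\cdot\phi,h)=0$, the boundary contribution vanishing because $\phi$ has compact support inside the open half-space and $\nabla\cdot\phi=0$. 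Hence $\mathscr{C}_0(\R^n_+)\subseteq J^q_w(\R^n_+)$. Since $J^q_w(\R^n_+)$ is the intersection of the kernels of the bounded linear functionals $u\mapsto(u,\nabla h)$ on $L^q_w(\R^n_+)$, it is a closed subspace, and passing to the closure yields $\mathcal{J}\subseteq J^q_w(\R^n_+)$.

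For the reverse inclusion $J^q_w(\R^n_+)\subseteq\mathcal{J}$ I would argue by contradiction using Hahn--Banach together with the duality $\big(L^q_w(\R^n_+)\big)^*=L^{q'}_{w'}(\R^n_+)$ recalled in the introduction. Suppose there were $u\in J^q_w(\R^n_+)\setminus\mathcal{J}$. Since $\mathcal{J}$ is a closed subspace not containing $u$, the Hahn--Banach theorem furnishes a functional represented by some $g\in L^{q'}_{w'}(\R^n_+)$ such that $(\phi,g)=0$ for every $\phi\in\mathscr{C}_0(\R^n_+)$ while $(u,g)\neq0$. The relation $(\phi,g)=0$ for all divergence-free $\phi\in C^\infty_0(\R^n_+)$ is exactly the hypothesis of the classical de Rham lemma, which then guarantees the existence of a distribution $p$ on $\R^n_+$ with $g=\nabla p$.

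It then remains to verify that $\nabla p\in G^{q'}_{w'}(\R^n_+)$, i.e. that $p\in W^{1,q'}_{loc}(\R^n_+,w')$ in the sense of the definition recalled above. The requirement $\nabla p\in L^{q'}_{w'}(\R^n_+)$ is immediate since $\nabla p=g$. For the local integrability of $p$ itself I would use that $w'=w^{-1}$ with $w=\prod_j|x-\bar x_j|^{\alpha_j}$ locally bounded (as $\alpha_j\ge0$): on any compact $K$, Hölder's inequality gives $\int_K|g|\,dx=\int_K|g|\,w'\,w\,dx\le\|g\|_{L^{q'}_{w'}(K)}\|w\|_{L^q(K)}<\infty$, so $g\in L^1_{loc}(\R^n_+)$, whence the distribution $p$, having gradient in $L^1_{loc}$ over the connected set $\R^n_+$, is itself represented by an $L^1_{loc}$ function. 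Thus $p\in W^{1,q'}_{loc}(\R^n_+,w')$ and $\nabla p\in G^{q'}_{w'}(\R^n_+)$. Finally, since $u\in J^q_w(\R^n_+)$, the defining orthogonality gives $(u,\nabla p)=0$, that is $(u,g)=0$, contradicting $(u,g)\neq0$. This forces $J^q_w(\R^n_+)\subseteq\mathcal{J}$ and, combined with the first step, completes the proof.

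I expect the de Rham step to be the crux: producing the potential $p$ from the orthogonality to solenoidal test fields is a purely distributional fact independent of the weight, but the care needed lies in checking that this potential lands in the correct weighted local Sobolev class $W^{1,q'}_{loc}(\R^n_+,w')$, so that $\nabla p$ genuinely qualifies as an element of $G^{q'}_{w'}(\R^n_+)$ against which elements of $J^q_w(\R^n_+)$ are required to be orthogonal. Everything else is either an integration by parts or a direct application of the duality and of the definitions of $J^q_w$ and $G^{q'}_{w'}$.
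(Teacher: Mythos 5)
Your proof is correct, but there is nothing in the paper to compare it against line by line: the paper's ``proof'' of this lemma consists entirely of the citation ``See \cite{GPG}, \cite{SS} or \cite{HDS}.'' What you have written is essentially the classical argument underlying those references, carried out explicitly in this paper's weighted framework, and it goes through. The easy inclusion is exactly as you say: $(\phi,\nabla h)=-(h,\nabla\cdot\phi)=0$ is immediate from the very definition of the distributional gradient (no boundary terms arise for compactly supported $\phi$), and $J^q_w(\R^n_+)$ is closed as an intersection of kernels of functionals bounded by the weighted H\"older inequality, since $w\,w'=1$. The hard inclusion is the Hahn--Banach/de Rham duality argument, and the three genuinely weight-dependent checkpoints are precisely the ones you isolated: (i) $g\in L^{q'}_{w'}(\R^n_+)$ is locally integrable because $w$ is continuous (all $\alpha_j\ge 0$), so de Rham's theorem applies and yields $g=\nabla p$ for a distribution $p$; (ii) $p$ is itself an $L^1_{loc}$ function because its gradient is in $L^1_{loc}$; (iii) the paper's definition of $W^{1,q'}_{loc}(\R^n_+,w')$ asks for nothing beyond $p\in L^1_{loc}$ and $\nabla p\in L^{q'}_{w'}(\R^n_+)$, so $\nabla p$ is an admissible test gradient in the definition of $J^q_w(\R^n_+)$ and the contradiction $(u,g)=0$ follows. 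Relative to the paper, your route buys self-containedness and makes visible that the weighted case reduces to the unweighted mechanism once (i)--(iii) are checked; this is a genuine service, since \cite{GPG} and \cite{SS} treat unweighted spaces and \cite{HDS} a different weighted setting, so the adaptation is left implicit by the paper. The one step you should not leave as an assertion is (ii): that a distribution on a connected open set with locally integrable gradient is represented by an $L^1_{loc}$ function is standard (mollification plus the Poincar\'e inequality on balls), but it deserves either a one-line proof or a precise citation, since it is the only analytic fact in your argument that is neither a definition nor quoted from the paper.
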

\bp See \cite{GPG}, \cite{SS} or \cite{HDS}. \ep
 {\bf Acknowledgment} -
We would like to thank Professor P. Maremonti for his valuable supervision and continuous support throughout this research. \\
The paper is
 performed under the
auspices of GNFM-INdAM.
\vskip0.1cm\noindent
 {\bf Declarations}
\vskip0.1cm\noindent
 {\bf Funding} -  The research activity of V. P. was partially supported by Università degli Studi della Campania “Luigi Vanvitelli”, D.R. 111/2024, within VISCOMATH project.
\vskip0.1cm\noindent
 {\bf Conflict of interest} - The authors have no conflicts of interest to declare that are relevant to the content of this article.


\end{document}